%
%
%

\documentclass[graybox]{svmult}


\usepackage{type1cm}        
%
\usepackage{makeidx}         
\usepackage{graphicx}        
\usepackage{multicol}        
\usepackage[bottom]{footmisc}

\usepackage{newtxtext}       %
\usepackage[varvw]{newtxmath}       


\makeindex             


\usepackage[makeroom]{cancel}
\usepackage{bbm, dsfont}

\usepackage{amsmath}

\begin{document}

\title*{Data representation with optimal transport}
\author{Roc\'io D\'iaz Mart\'in\orcidID{0000-0002-3732-6296} and\\ Ivan Vladimir Medri\orcidID{0000-0003-2419-2193} and\\  Gustavo Kunde Rohde\orcidID{0000-0003-1703-9035}}
\institute{Vanderbilt University \at Dept. of Mathematics, 1326 Stevenson Center, Station B 407807, Nashville, TN 37240. \email{rocio.p.diaz.martin@vanderbilt.edu}
\and University of Virginia \at Dept. of Biomedical Engineering, Thornton Hall, 351 McCormick Road,
Charlottesville, VA 22904. \email{pzr7pr@virginia.edu}
\and University of Virginia \at Dept. of Biomedical Engineering and Electrical and Computer Engineering, Thornton Hall, 351 McCormick Road,
Charlottesville, VA 22904. \email{gustavo@virginia.edu}}
%
%
\maketitle

\abstract*{Optimal transport has been used to define bijective nonlinear transforms and different transport-related metrics for discriminating data and signals. Here we briefly describe the advances in this topic with the main applications and properties in each case.}

\section{Introduction}
\label{sec:intro}

Finding mathematical formulas for representing data (such as signals, images, vectors, measures, etc.) has long interested mathematicians, engineers, and scientists. The underlying premise is that certain mathematical representations can facilitate solutions to various problems. To achieve this, operators that map the original space of the objects of interest into a space of representations are often defined and referred to as \textit{transforms}.

For instance, in the field of Signal Processing Analysis\footnote{In signal processing, the term \textit{analysis} typically replaces the notion of \textit{representation}.}, there are many examples commonly used: 
(1) The Fourier transform represents signals in terms of frequency components, allowing for the practical solution of problems related to shift-invariant linear operators, including the heat (diffusion) equation, convolution problems, and numerous others \cite{stein2011fourier}.
(2) The Laplace transform represents derivatives in a way that allows the conversion of initial value problems into algebraic problems \cite{siebert1986circuits}.
(3) In medical imaging, if a function describes an unknown density in a patient, we can also represent it by the collection of all the X-ray projections taken at different angles around the patient, and use the Radon transform for reconstruction purposes \cite{helgason2011integral}.
(4) The wavelet transform analyzes signals in time by introducing two new variables, namely scale, and resolution \cite{mallat1999wavelet}. The listed mathematical transformations (Fourier, Laplace, Radon, and Wavelet transform) are linear operators, and thus often fail to deal with the non-linearities present in modern data science applications. There are some exceptions to this shortcoming. For example, the scattering transform is non-linear and has been
successfully applied to machine learning applications \cite{mallat2012group}.



In this chapter, we will describe a family of non-linear transforms rooted in the theory of optimal transport (OT) (see also \cite{kolouri2016transport}). In addition to obtaining new representations through these transforms, they will allow us to create new metrics or distances to compare our original signals or data (see, for e.g., \cite{rubaiyat2020parametric,shifat2021radon, rubaiyat2024end, basu2014detecting,kundu2020enabling}).   

The theory of optimal transport addresses matching problems between different configurations of mass by posing a minimization problem \cite{sant2015,Villani2003Topics,Villani2009Optimal}. 
The total cost that results from solving the optimization problem introduces new tools for comparing signals: the so-called Wasserstein distances. Since these distances arise from alignment problems, they are often more `natural' for comparing probability densities or measures than typical Euclidean distances (i.e., 
$L^2$-distances). However, Wasserstein distances are difficult and expensive to compute.

The transforms we will describe in this chapter serve as a trade-off between transport theory and classical 
$L^2$-theory. We will interpret our data or signals as measures and embed the space of measures into an 
$L^2$-space. This embedding will be a one-to-one transform that relates to optimal transport. These \textit{transport transforms}, which we will usually denote by $\widehat{\cdot}$, will enable us to define metrics in the original space of measures that retain some natural properties of Wasserstein distances while also being expressible as a simple Euclidean norm, thus allowing the computational framework of Hilbert spaces. See Figures \ref{fig: transform diagram} and \ref{fig: tangent}.

\begin{figure}
    \centering
    \includegraphics[width=0.9\linewidth]{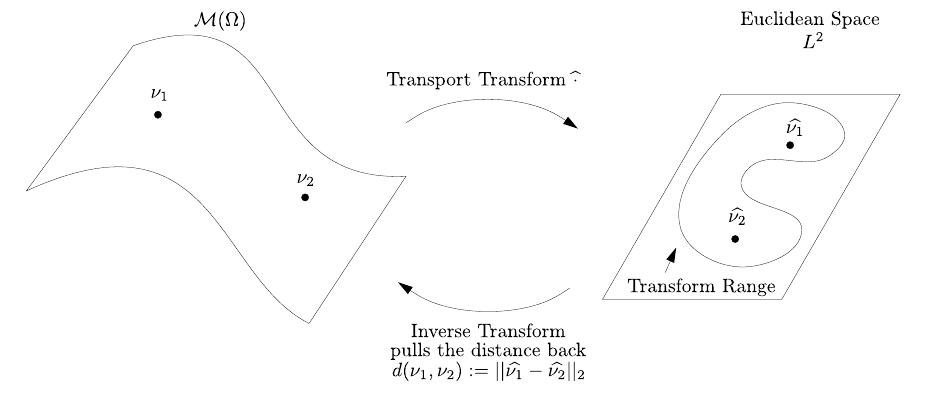}
    \caption{The space of measures  $\mathcal{M}(\Omega)$ defined on $\Omega\subseteq\mathbb{R}^d$ is embedded into a Euclidean space through a \textit{transport transform} $\widehat{\cdot}$. By pulling the $L^2$-norm back we define a distance $d(\cdot,\cdot)$ in the space of measures.}
    \label{fig: transform diagram}
\end{figure}

\noindent\textbf{Motivating tasks.}
Let us assume that in a physical system, a density function (that might represent mass at every point in space) suffers deformations given by translations and dilations without modifying the total mass. Thus, it would take the form $f_{(\omega,\tau)} (x) = \omega f(\omega x-\tau)$, where $f$ is a \textit{template} configuration and $(\omega,\tau)$ are deformation parameters.
Suppose that we experimentally measure this quantity $f_{(\omega, \tau)}$ obtaining an estimated density $h$.
Some example questions that transport transforms can help to solve are:
\begin{enumerate}
    \item (Classification.) If the initial configuration $f$ is unknown but lies within a finite number of {template} options $\{f_k\}_{k=1}^K$, how can we identify the class $k$ of our system?
    \item (Estimation.) If the initial template is known, how can we find the correct deformation parameters $(\omega,\tau)$? 
    \item (Reconstruction or generation.) If the template $f$ and the deformation parameters $(\omega,\tau)$ are known, is it possible to reconstruct the intermediate steps that changed the template $f$ to $f_{(\omega,\tau)}$ in a `natural' way? 
\end{enumerate}


The first task is a {classification problem} where each class is a collection of functions of the form $\mathbb{S}_k := \{ x\mapsto \omega f_k(\omega x-\tau): \, \omega\in\mathbb R_{>0}, \tau\in\mathbb R\}$ with $f_k$, $k=1,\dots,K$, fixed templates. Those classes are not linearly separable as can be seen in the left panel of Figure \ref{fig: transform diagram classes}. Nevertheless, if we apply certain transport transform, the transformed classes take the form  $\widehat{\mathbb{S}_k} = \{\xi\mapsto a\widehat{f_k}(\xi) + b: \, a\in\mathbb{R}_{>0}, b\in\mathbb R\}$ (as will be explained in Section \ref{sec:embeddings_transforms}) creating half-planes, which are linearly separable in transform space as shown in the right panel of Figure \ref{fig: transform diagram classes}. 

\begin{figure}[h!]
    \centering
    \includegraphics[width=\linewidth]{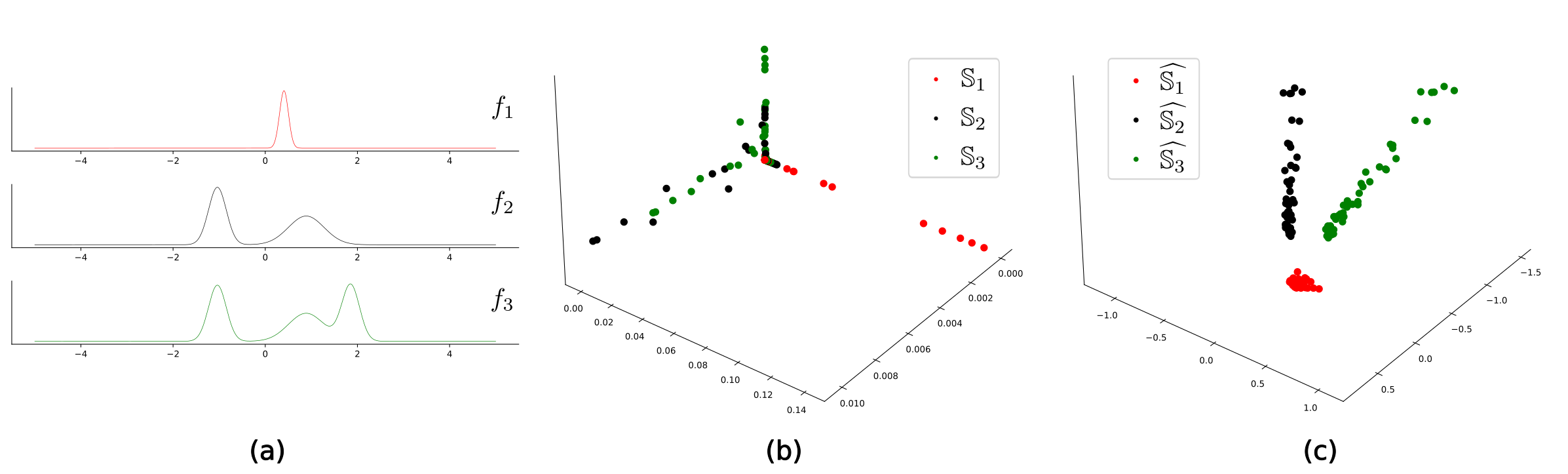}
    \caption{(a) Represents three template functions $f_1,f_2,f_3$. (b) Represents the translations and dilations of the templates, $\mathbb{S}_1, \mathbb{S}_2, \mathbb{S}_3$, in 3D space using three coordinate values. (c) Represents the transforms of translations and dilations of the templates, $\widehat{\mathbb{S}_1}, \widehat{\mathbb{S}_2}, \widehat{\mathbb{S}_3}$, in 3D space.}
    \label{fig: transform diagram classes}
\end{figure}


For the second point, when the template $f$ is known, we seek for $(\omega,\tau)$ so that $f_{(\omega,\tau)}$ best matches the measured signal $h$.  A naive attempt to solve this problem would be to minimize the functional $J_1(\omega,\tau) = \|f_{(\omega,\tau)} - h \|_2^2$. 
Nevertheless, this is a non-convex problem (with respect to $(\omega,\tau)$) and global minima might be hard to find. With the use of transport transforms, a different functional $J_2(a,b) = \|a\widehat{f} + b -\widehat{h}\|_2^2$ with $a = 1/\omega, b = \tau/\omega$ can be utilized. Minimizing $J_2$ is a convex problem, a global minimum is easily found, and the parameters $(\omega,\tau)$ can be obtained afterward from $({a},{b})$. Using the injectivity and properties of the transform, it can be proven that the parameters found with this method are exactly the ones that make $h = f_{(\omega,\tau)}$.





The third task is an interpolation problem as we want to generate intermediate signals between the template $f$ and the target $f_{(\omega,\tau)}$. 
Since the transport transforms are embeddings in $L^2$ spaces, the interpolation can be obtained as a simple convex combination $(1-t)\widehat{f}+t\widehat{f_{(\omega,\tau)}}$, for $0\leq t\leq 1$, and then applying the inverse transform. Figure \ref{fig: transform diagram interpol} compares such transition with the convex combination  $(1-t)f+t{f_{(\omega,\tau)}}$ taken directly in the original signal space.

\begin{figure}[h!]
    \centering
    \includegraphics[width=\linewidth]{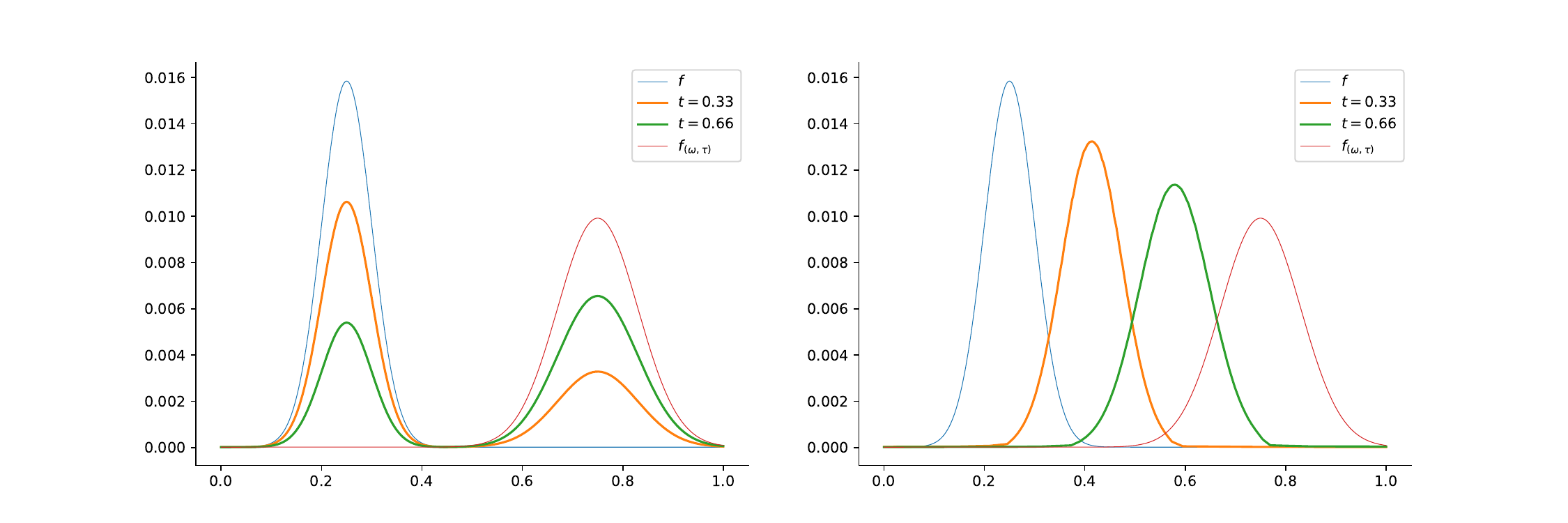}
    \caption{(Left) Interpolation in original space. A Gaussian function $f$ that moves to another Gaussian $f_{(\omega,\tau)}$ has intermediate states given by a combination of two Gaussians. (Right) A Gaussian function $f$ moves to another Gaussian $f_{(\omega,\tau)}$ using interpolation in transform space. Intermediate densities behave in accordance with the initial and final configurations ($f$ and $f_{(\omega,\tau)}$).}
    \label{fig: transform diagram interpol}
\end{figure}



The underlying concept among the above-mentioned problems is that a template function is being altered through mass-preserving transformations.
Although we described our examples using dilations and translations, more complicated deformations can be considered. Usually,  they are associated with changes in the domain of the functions that are hard to express in a simple form for classification, estimation, or reconstruction. Transport theory conveniently captures these deformations, and transport transforms will allow us to express them in a simplified formulation that can utilize the computational power of Euclidean space methods. The advantages illustrated above will be detailed in this chapter. These are not exhaustive; other benefits include the possibility of performing PCA in transform-space and optimizing costs using gradient descent, among others.\\

\noindent \textbf{Signals and images, point clouds, sampled data, density functions, and measures.}
The subject matter of this chapter is related to the mathematical modeling of signals, images, and data in general. We will adopt \textit{measure theory} as a unifying framework. 
Measures $\mu$ generalize considering signals and images as functions $f_\mu$.
In many cases, the physical meaning of $f_\mu$ is related to the \textit{density} of a certain quantity (mass, energy, attenuation, pressure, etc.\footnote{For example, in fluorescence microscopy, the amount of light (i.e., the number of photons) hitting a detector at location $x$ is directly proportional to the amount of fluorescently tagged protein in the optical path at that location.
Similarly, in an image obtained by an optical camera, $f_\mu(x)$ refers to the optical flux per unit area (radiant exitance) \cite{barrett2013foundations}. 
In 3D magnetic resonance imaging (MRI), the intensity of the proton density image $f_\mu(x)$ is directly proportional to the proton density at the location $x$, which means that the image intensity $f_\mu(x)$ reflects the concentration of protons at each point in the scanned volume.
Similar interpretations can be made about numerous other sensing and imaging modalities, including nuclear medicine (density of a radioactive tracer), X-ray computed tomography (a 2D or 3D distribution of attenuation coefficients), and sound intensity (pressure per unit area per time), among other examples.}). 
The measure $\mu$ with density $f_\mu$ computes the total amount of that quantity within a region $A \subseteq \mathbb{R}^d$ 
via
\begin{equation} \label{eq: measure with density}
  \mu(A) = \int_A f_\mu(x)\, dx.  
\end{equation}
For digital data (e.g., discrete images\footnote{A two-dimensional grayscale image is usually regarded as a function mapping a spatial domain  $\Omega \subseteq \mathbb{R}^2$ to $\mathbb{R}$. Typically, when using pixels, $\Omega$ is a finite discrete set.}, point clouds\footnote{Point clouds are collections of points representing object surfaces often produced 3D. Each point $x$ encodes a position in Cartesian coordinates and can be viewed as a delta measure $\delta_x$. 
}, or when considering sampled data) we may have intensity measurements $f_\mu(x_k)$ only at locations $x_k \in \mathbb{R}^d, k=1, \dots, K$. Thus, we can view our data as a discrete measure $\mu = \sum_{k=1}^K f_\mu(x_k) \delta_{x_k} $ (that is, $\mu(A) = \sum_{k=1}^K f_\mu(x_k) \delta_{x_k}(A)$, where $\delta_{x_k}(A)=1$ if $x_k\in A$ and 0 otherwise). 

Measures are the natural objects when studying optimal transport theory. Thus, as our transforms will be motivated by mass-transportation problems, using measure theory will simplify the notation and formula as will be seen later in this chapter. 
\\

\noindent\textbf{Chapter Organization.} In Section \ref{sec: ot}, we provide a brief introduction to Optimal Transport. 
Section \ref{sec:embeddings_transforms} starts by synthesizing the general framework of transport transforms, and later summarizes its variants related to closed-form solutions for Optimal Transport in 1D. Applications of these tools are given in Section \ref{sec: app}, where we revisit the motivating tasks stated in this introduction.

\section{Formulation of the Optimal Transport problem}\label{sec: ot}


\noindent\textbf{Monge's formulation.}
Let $\mathcal{P}(\Omega)$ be the set of probability measures defined in a subset $\Omega\subseteq \mathbb{R}^d$, and consider $\mu$, $\nu \in \mathcal{P}(\Omega)$. The Monge Optimal Transport (OT) problem between $\mu$ and $\nu$ is to find the most economical procedure to move all the mass distributed according to $\mu$ onto the target $\nu$ by using a map $T$. Mathematically, it is stated as the minimization problem 
    \begin{equation*}
        OT(\mu,\nu) = \inf_{T \in \mathrm{MP}(\mu,\nu)} \int_{\Omega} c(x,T(x)) d\mu(x),
    \end{equation*}
where $c:\Omega\times\Omega \to \mathbb{R}_{\geq 0}$ represents the cost of transporting one unit of mass from position $x$ to $y= T(x)$, and $\mathrm{MP}(\mu,\nu)$ is the set of all \textit{mass-preserving} functions, that is,  measurable functions $T:\Omega\to\Omega$ that push measure $\mu$ into $\nu$ in the sense that 
    \begin{equation*}\label{eq: pushforward}
        \nu(B) = \mu(T^{-1}(B)) =\mu(\{x: \, T(x)\in B\}) \quad \text{ for every $\nu$-measurable set $B$.}
    \end{equation*}
If $T$ satisfies this relation, it is called \textit{transport map}, and  we say that $\nu$ is the pushforward of $\mu$ by $T$, denoted as $\nu = T_\#\mu$
\footnote{$\nu = T_\#\mu$ can also be defined through the change of variables formula
    \begin{equation*}
       \int \psi(y)\, d\nu(y)= \int \psi(y) \, d(T_\#\mu)(y) = \int \psi(T(x)) \, d\mu(x) 
    \end{equation*}
    for every test function $\psi$. Also, 
    if $X$ and $Y$ are random variables with distributions $\mu$ and $\nu$, respectively, we have that $Y=T(X)$. 
}. 
They encode that all mass at point $x$ should be moved to $y=T(x)$. 
In the special case where $\mu$ and $\nu$ have density functions $f_\mu$ and $f_\nu$ (i.e., $\mu$ is defined according to \eqref{eq: measure with density}, and analogously for $\nu$),  the above relation can also be written as 
\begin{equation*}
    \int_{T^{-1}(B)} d\mu = \int_{T^{-1}(B)} f_\mu(x)dx = \int_{B} f_\nu(x)dx = \int_{B}d\nu.
\end{equation*}
In addition, if $T$ is smooth and one-to-one,  using the change of variable formula and denoting by $DT$ the Jacobian of $T$, we obtain  $\nu=T_\#\mu$ if and only if 
\begin{equation*}\label{eq: pushforward for densities}
   |\mathrm{det}(DT(x))| \,  f_\nu(T(x)) = f_\mu(x).
\end{equation*}
A closed-form solution for Monge's problem can be obtained when working with one-dimensional measures. Given $\nu \in \mathcal{P}(\mathbb{R})$ we define its cumulative distribution function (CDF) as $F_\nu(x) := \nu([-\infty, x))$ together with its \textit{generalized inverse}\footnote{To be more precise, when considering $\nu\in\mathcal{P}(I)$ where  $I=[a,b]$ is a real interval, we define the generalized inverse of $F_\nu:[a,b]\to[0,1]$ as the function $F_\nu^\dagger:[0,1] \to [a,b]$ given by
            $F_\nu^\dagger(y) := \inf \{x\in [a,b]: \, F_\nu(x) > y \}$, 
    where we impose $\inf \emptyset = b$. Notice that it is similar to the \textit{quantile} function the distribution $\nu$.}  
\begin{equation*}
    F_\nu^{\dagger}(y) := \inf\{x\in \mathbb{R}: \, F_\nu(x) > y \}.
\end{equation*}
Note that when $F_\nu$ is invertible, the generalized inverse coincides with the inverse. Otherwise, for non-decreasing functions, it is just the function whose graph is a reflection through $y = x$ of the graph of $F_\nu$ considering that flat regions are reflected into a discontinuous jump and vice-versa. With these definitions, in the case when $\mu$ absolutely continuous with respect to the Lebesgue measure ($\mu\ll\mathcal{L}$) an optimal Monge map from $\mu$ to $\nu$ can be explicitly obtained as 
\begin{equation}\label{eq: gen inverse comp with cdf}
    T(x) = F_\nu^{\dagger}(F_\mu(x)).
\end{equation}   
In terms of densities, that is, if $\mu$ and $\nu$ have probability density functions (pdf) $f_\mu$ and $f_\nu$, respectively, $T$ in \eqref{eq: gen inverse comp with cdf} satisfies the following integral equation (see Figure \ref{fig: Monge OT}):
\begin{equation}\label{eq: preserve areas}
   \underbrace{\int_{-\infty}^{x} f_\mu(x)\, dx}_{F_\mu(x)}=\int_{-\infty}^{x}f_\nu(T(x))T^\prime(x)\, dx=\underbrace{\int_{-\infty}^{T(x)}f_\nu(y)
   \, dy}_{
   F_{\nu}(T(x))}
\end{equation}

\begin{figure}[h!]    
  \centering
    \includegraphics[width=0.9\linewidth]{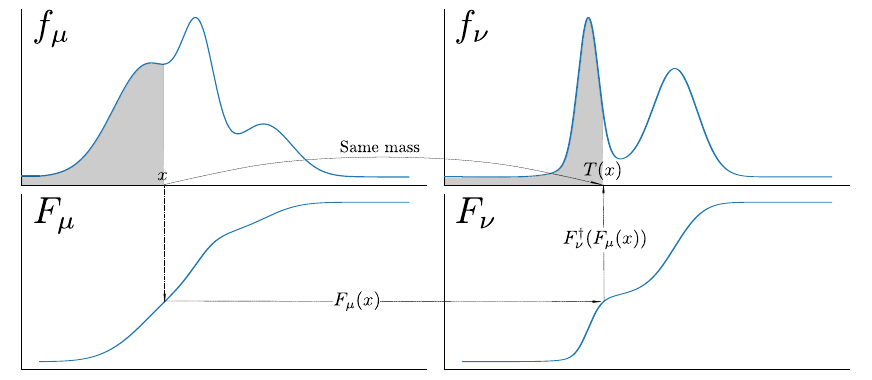}

     \caption{Illustration of mass transportation from reference $\mu$ to target $\nu$ through a MP map $T$ in 1D. The measures $\mu$ and $\nu$ have pdfs $f_\mu$ and $f_\nu$, and CDFs $F_\mu$ and $F_\nu$, respectively. The gray shadowed areas coincide. They represent the accumulated mass until point $x$ (on the left) and until point $T(x)$ (on the right). To achieve this conservation of mass, the increasing function $T$ is obtained as \eqref{eq: preserve areas}. Such formulation \eqref{eq: preserve areas} is visualized in this figure: $x\mapsto F_\mu(x)\mapsto F_\nu^\dagger(F_\mu(x))=T(x)$.}
    \label{fig: Monge OT}
\end{figure}

\noindent\textbf{Kantorovich's formulation.} For higher dimensions, Monge's optimal transport problem has no closed-form solution. In addition, it is sometimes ill-posed when working with discrete measures. For example, there is no function $T$ that pushes a single delta measure $\mu = \delta_0$ onto $\nu = 0.5 (\delta_{-1} + \delta_{1})$ since it would require to split the mass at $x=0$ into two separate targets $y = \pm 1$. Besides, the constraint  $T:X\to Y$ such that $\nu = T_\# \mu$ is non-linear. This makes the minimization procedure rather convoluted. 
An alternative formulation of the optimal transport problem that solves most of these issues was proposed by Kantorovich. Specifically, Kantorovich's optimal transport problem is stated as the minimization problem   
\begin{equation}\label{eq: kantorovich}
    OT(\mu,\nu) = \inf_{\pi \in \Pi(\mu,\nu)} \int_{\Omega^2} c(x,y) d\pi(x,y),
\end{equation}
where $\Pi(\mu,\nu)$ is the set of all joint probability measures in $\Omega^2$ with marginals $\mu$ and $\nu$, that is, $\pi(A\times \Omega) = \mu(A)$, $\pi(\Omega\times B) = \nu(B)$ for all measurable sets  $A, B \subseteq \Omega$. A measure $\pi \in \Pi(\mu,\nu)$ is called a \textit{transportation plan}, and the quantity $\pi(A\times B)$ represents how much mass from $A$ should be moved to region $B$. 
Now, when working with the example mentioned before\footnote{In general, for the discrete case,    $\mu=\sum_{i=1}^n\mu(x_i)\delta_{x_i}$,  $\nu=\sum_{j=1}^m\nu(y_j)\delta_{y_j}$ the problem \eqref{eq: kantorovich} reads as    ${\min\sum_{i=1}^n\sum_{j=1}^mc(x_i,y_j) \, \pi(x_i,y_j)}$ subject to   $\sum_{i=1}^n\pi(x_i,y_j)=\nu(y_j)$,  $\sum_{j=1}^m\pi(x_i,y_j)=\mu(x_i)$.
Understanding the discrete plans as matrices $\pi\in \mathbb{R}^{n\times m}$, this becomes a linear programming.}, moving $\mu = \delta_0$ onto $\nu = 0.5 (\delta_{-1} + \delta_{1})$ can be done by using a plan of the form $\pi = 0.5\delta_{0,1}+ 0.5\delta_{0,-1}$. 


The set of all transportation plans is non-empty as $\pi=\mu\times\nu\in \Pi(\mu,\nu)$. A minimizer of \eqref{eq: kantorovich} is called an optimal transport plan and its existence is guaranteed when the cost function $c$ is continuous.
Nevertheless, optimal plans are not necessarily unique.
For our exposition, we will work with the Euclidean cost $c(x,y) = \|x-y\|^2$ which is strictly convex.  To guarantee \eqref{eq: kantorovich} to be finite, we will restrict the space of measures to those with finite second moment
\begin{equation*}
    \mathcal{P}_2(\Omega) := \left\{\nu \in \mathcal{P}(\Omega): \, \int_\Omega \|x\|^2 d\nu <\infty \right\}.
\end{equation*}
In this case,
a stronger result can be obtained that guarantees the existence, uniqueness, and finiteness of the solution of Kantorovich and Monge's problems simultaneously.
A space of functions that will be useful in the next theorem and the rest of the chapter is the space of square-integrable functions that are gradients of convex functions
\begin{eqnarray}\nonumber
    L_{G}^2(\mu):= \{T:\mathbb{R}^d\to \mathbb{R}^d : \, T=\nabla \varphi, \, \varphi \text{ convex and } \int \|T(x)\|^2d\mu(x) <\infty\}. \nonumber
\end{eqnarray}

\begin{theorem}[Brenier's theorem] \label{thm: Brenier}
    Let $\mu,\nu\in\mathcal{P}_2(\mathbb{R}^d)$ and $c(x,y) = \|x-y\|^2$. Suppose that $\mu$ is absolutely continuous with respect to the Lebesgue measure in $\mathbb{R}^d$ ($\mu\ll\mathcal{L}$). Then, there exists a unique solution to Kantorovich's problem. It is of the form  $\pi = (\mathrm{id}, T)_\# \mu$, with $T\in L_{G}^2(\mu)$. Moreover, $T$ is also the unique solution to Monge's problem, and $T_\#\mu$, and we will denote it by $T=T_{\mu\to\nu}$. Conversely, let $\mu \in \mathcal{P}_2(\mathbb{R}^d)$ and $T \in L_{G}^2(\mu)$, then $T$ is optimal for the Monge Problem between the measures $\mu$.     
\end{theorem}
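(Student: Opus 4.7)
The plan is to reduce the quadratic-cost transport problem to a statement about cyclically monotone sets, and then invoke Rockafellar's characterization of such sets as graphs of subdifferentials of convex functions. The first step is the algebraic identity $\|x-y\|^2 = \|x\|^2 - 2\langle x,y\rangle + \|y\|^2$. Because the marginals of any $\pi \in \Pi(\mu,\nu)$ are fixed and both $\mu$ and $\nu$ lie in $\mathcal{P}_2(\mathbb{R}^d)$, the terms $\int \|x\|^2 d\mu$ and $\int \|y\|^2 d\nu$ are finite constants. Hence Kantorovich's problem \eqref{eq: kantorovich} with quadratic cost is equivalent to \emph{maximizing} $\pi \mapsto \int \langle x,y\rangle d\pi(x,y)$ over $\Pi(\mu,\nu)$.

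Next, I would establish existence of an optimal plan. The family $\Pi(\mu,\nu)$ is tight (its marginals are fixed, and single probability measures are tight in $\mathbb{R}^d$) and weakly closed, hence weakly compact by Prokhorov's theorem. Since $c(x,y)=\|x-y\|^2$ is continuous and nonnegative, $\pi\mapsto\int c\, d\pi$ is weakly lower semicontinuous, and the infimum in \eqref{eq: kantorovich} is attained.

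The heart of the proof is structural. Any optimal plan $\pi$ has \emph{cyclically monotone} support: for any finite collection $(x_i,y_i) \in \mathrm{spt}(\pi)$, the inequality $\sum_i \langle x_i,y_i\rangle \geq \sum_i \langle x_i,y_{\sigma(i)}\rangle$ holds for every permutation $\sigma$, since otherwise a local rerouting would strictly improve the cost. Rockafellar's theorem then provides a lower semicontinuous proper convex function $\varphi:\mathbb{R}^d \to \mathbb{R}\cup\{+\infty\}$ whose subdifferential $\partial \varphi$ contains $\mathrm{spt}(\pi)$. Because $\mu \ll \mathcal{L}$, and because a convex function is differentiable almost everywhere on the interior of its domain (by Rademacher's theorem applied to its local Lipschitz representation, or by Alexandrov's theorem), the set-valued map $\partial \varphi$ is single-valued $\mu$-almost everywhere. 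Thus for $\mu$-a.e.\ $x$ there is a unique $y = \nabla \varphi(x)$ with $(x,y) \in \mathrm{spt}(\pi)$, which forces $\pi$ to be concentrated on the graph of $T:=\nabla \varphi$. Equivalently, $\pi = (\mathrm{id},T)_\#\mu$ and $T \in L^2_G(\mu)$ (the $L^2$ condition follows since $\int\|T(x)\|^2 d\mu(x)=\int\|y\|^2 d\nu(y)<\infty$). In particular $T$ is a Monge map, so the Monge infimum is attained and equals the Kantorovich infimum.

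For uniqueness, if $\pi_1,\pi_2$ are optimal, then so is $\tfrac{1}{2}(\pi_1+\pi_2)$; its support is cyclically monotone and hence contained in a single subdifferential, which forces the associated gradients to coincide $\mu$-a.e., and then $\pi_1=\pi_2$ by the graph representation. The converse direction is easier: if $T=\nabla\varphi$ with $\varphi$ convex, the graph of $\partial\varphi$ is cyclically monotone by a direct telescoping of the defining inequalities of the subdifferential, and any plan supported on a cyclically monotone set is optimal for its own marginals (this can be shown via the Kantorovich dual, using $\varphi$ and its Legendre transform $\varphi^*$ to produce matching admissible potentials). The main obstacle is the Rockafellar step combined with the a.e.\ differentiability of convex functions; together they turn a measure-theoretic optimality condition into a pointwise single-valued map, and this is where the absolute continuity hypothesis on $\mu$ is essential.
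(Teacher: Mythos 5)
The paper does not prove this statement: Brenier's theorem is quoted as a classical result from the optimal transport literature (cf.\ \cite{Villani2003Topics,sant2015}) and used as a black box throughout, so there is no in-paper argument to compare against. Your proposal is the standard textbook proof and its overall architecture is sound: reduction of the quadratic cost to maximizing the correlation $\int \langle x,y\rangle\, d\pi$, existence via Prokhorov plus lower semicontinuity, cyclical monotonicity of the support of an optimal plan, Rockafellar's theorem to produce a convex potential $\varphi$, a.e.\ differentiability of $\varphi$ together with $\mu\ll\mathcal{L}$ to collapse $\partial\varphi$ to a single-valued gradient, the midpoint trick for uniqueness, and Fenchel--Young duality for the converse. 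Two steps are asserted rather than proved and deserve to be flagged. First, ``otherwise a local rerouting would strictly improve the cost'' is only a heuristic for cyclical monotonicity of $\mathrm{spt}(\pi)$; the rigorous version requires constructing an admissible competitor plan from a violating cycle (a gluing argument), and is the genuinely technical part of this route. Second, in the converse direction the duality argument needs $\varphi\in L^1(\mu)$ and $\varphi^*\in L^1(T_\#\mu)$ before one can split $\int(\varphi(x)+\varphi^*(y))\,d\pi$ into its marginals; this holds under the second-moment hypotheses but requires a short normalization argument (e.g.\ comparing $\varphi$ to a quadratic from below). With those two points filled in, the proof is complete and matches the form of the statement, including the single-valuedness step where, as you correctly note, the hypothesis $\mu\ll\mathcal{L}$ is used in an essential way (the non-differentiability set of a convex function is Lebesgue-null).
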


\begin{remark}
    Let $\mu,\nu$ as in Brenier's Theorem. If $T_1=\nabla \varphi_1$ and $T_2=\nabla \varphi_2$ for $\varphi_1,\varphi_2$ convex functions, and $(T_1)_\#\mu=\nu=(T_2)_\#\mu$, then $T_1=T_2$.
\end{remark}



\noindent\textbf{Wasserstein distance.} Intuitively speaking, moving a distribution of mass $\mu$ to $\nu$ in an optimal way should cost less than moving $\mu$ to a third measure and then to $\nu$, which resembles a triangle inequality. In fact, in $\mathcal{P}_2(\Omega)$ the square root of the transport cost $OT(\cdot,\cdot)$ defines the so called Wasserstein metric
    \begin{equation*}
        W_2(\mu,\nu) := OT(\mu,\nu)^{1/2}.
    \end{equation*}

\noindent\textbf{Dynamic formulation.} 
The Wasserstein distance renders $\mathcal{P}_2(\Omega)$ a metric space. This structure is very rich geometrically. For simplicity and from now on, let us assume that $\Omega\subset\mathbb{R}^d$ is convex and compact. $(\mathcal{P}(\Omega), W_2)$ is also a geodesic space: given two measures  $\mu,\nu\in\mathcal{P}(\Omega)$, there exists a curve of measures $\rho_t\in\mathcal{P}(\Omega)$, for $0\leq t\leq 1$, that is the shortest path in terms of $W_2$ connecting them ($\rho_0=\mu$, $\rho_1=\nu$) and, moreover, the length of $\rho_t$ is exactly $W_2(\mu,\nu)$.
 Furthermore, as an infinite-dimensional manifold, $(\mathcal{P}(\Omega), W_2)$ possesses a Riemannian structure (its tangent spaces are equipped with an inner product).
A precise definition of these structures 
requires the dynamic approach \cite{benamou2000computational} of the Optimal Transport problem.

Kantorovich and Monge's approaches provide static formulations of the transport problem. Roughly speaking, transport plans and maps give a rule that assigns to each initial mass a final point where it must be moved, but they do not say how the system should evolve from the initial to final configurations. 

In the dynamic formulation, we consider a curve of measures $\rho \in \mathcal{P}([0,1]\times\Omega)$ that for each time $t\in [0,1]$ gives a measure $\rho_t:=\rho(t,\cdot)\in \mathcal{P}(\Omega)$. To transport mass from $\mu$ to $\nu$ we will require the curve to have initial and final conditions $\rho_0 = \mu$ $\rho_1 = \nu$, to be sufficiently smooth, and to satisfy a conservation of mass law: the curve $\rho_t$ together with a velocity field $v_t:= v(t,\cdot)$ must satisfy the \textit{continuity equation} with boundary conditions\footnote{To impose $\rho$ to preserve the mass, we can state that for every  $B\subset \Omega$, we should have $\frac{d}{d t} \int_B \rho_t d V=-\int_{\partial B} \rho_t v_t \cdot d S$, where $dV$ and $dS$ denote volume and surface integrals, respectively. By using the Divergence Theorem we obtain
$\frac{d}{d t} \int_B \rho_t d V=-\int_B \nabla \cdot \rho_t v_t d V$ from where we can derive the continuity equation \eqref{eq: continuity eq}.} 
\begin{equation}\label{eq: continuity eq}
    \partial_t\rho + \nabla \cdot (\rho v) = 0, \qquad \rho_0 = \mu, \quad \rho_1 = \nu.    
\end{equation}
Then, the Optimal Transport problem can be stated as minimizing the following \textit{kinetic energy}
\begin{equation}\label{eq: OT dynamic}
  OT(\mu,\nu)=\inf\left\{\int_{0}^1\int_{\mathbb{R}^d} \|v_t(x)\|^2 d\rho_t(x)dt: \, (\rho,v) \text{ satisfies  \eqref{eq: continuity eq}}\right\}  
\end{equation}

Since the space $\mathcal{P}(\Omega)$  has the metric $W_2$, we can speak about the length of curves. Formally, for a curve $\rho \in \mathcal{P}([0,1]\times\Omega)$ we would define its length as the limit 
\begin{align*}
    \text{length}(\rho) &\approx 
    \lim_{\Delta t \to 0} \sum \frac{W_2(\rho_{t+\Delta t},\rho_t)}{\Delta t} \Delta t\approx \int_0^1 \lim_{\Delta t \to 0} \frac{W_2(\rho_{t+\Delta t},\rho_t)}{\Delta t} \ dt. \label{eq: formal integral length}
\end{align*}
In particular, for a curve $\rho$ solving  \eqref{eq: continuity eq} with velocity filed $v$, 
the length of the curve  can be computed as 
\begin{equation*}\label{eq: interal length}
     \text{length}(\rho) = \int_{0}^1\int_{\mathbb{R}^d} \|v_t(x)\|^2 d\rho_t(x)dt.
\end{equation*}


Under the assumption of the existence of an optimal Monge map $T_{\mu\to\nu}$, an optimal solution $(\rho,v)$ for \eqref{eq: OT dynamic} can be given explicitly. If a particle starts at position $x$ and finishes at position $T_{\mu\to\nu}(x)$, then for $0<t<1$ it will
be at the point $T_t(x)=(1-t)x +tT_{\mu\to \nu}(x).$
Varying both the time $t\in[0,1]$ and $x\in\Omega$, the mapping $(t,x)\mapsto T_t(x)$ 
can be interpreted as a flow whose time velocity 
 is 
\begin{equation}
    v_t(x) = T_{\mu\to\nu}(x_0)-x_0,  \qquad  \text{ for } x=T_t(x_0).\label{eq: ot v}  
\end{equation}
Then, to obtain the curve of probability measures $\rho_t$, one can evolve $\mu$ through the flow $T_t$ by using the formula 
\begin{equation}\label{eq: ot rho} 
     \rho_t=(T_t)_\#\mu=((1-t)\mathrm{id}+tT_{\mu\to\nu})_\#\mu, \qquad 0\leq t\leq 1.  
\end{equation}
It holds that this pair $(\rho,v)$ 
satisfies the continuity equation \eqref{eq: continuity eq} and solves \eqref{eq: OT dynamic}. The vector field $v_t$ can be viewed as 
the tangent vector to the evolution curve at time $t$. With this interpretation, the field $v_0(x) = T_{\mu\to\nu}(x)-x$ (which encodes the optimal displacement) is the initial tangent vector to the optimal curve transporting $\mu$ to $\nu$. In symbols, $v_0\in\mathrm{Tan}_\mu$. By determining $v_0$, from \eqref{eq: ot v} we know $v_t$ for every $t$. Also, it holds that $\text{length}(\rho)=\int_{\mathbb R^d} \|v_0(x)\|^2d\rho_0(x)=\int_{\mathbb R^d} \|T_{\mu\to\nu}(x)-x\|^2d\mu(x)$.  
If $\mu\ll \mathcal{L}$, by Brenier's Theorem $T_{\mu\to\nu}=\nabla\varphi$ with $\varphi$ convex and $\int_\Omega\|\nabla \varphi\|^2d\mu<\infty$, and so $v_0=\nabla u$ for  $u(x)=\varphi(x)-\|x\|^2/2$, which is also convex and with $\int_\Omega \|v_0\|^2d\mu<\infty$. 
Moreover, with the Wasserstein metric $W_2$, the space $\mathcal{P}(\Omega)$ becomes a Riemannian manifold \cite{ambrosio2005gradient,ding2021geometry,otto2001geometry,otto2000generalization}: the tangent space at a point
$\mu\in \mathcal{P}(\Omega)$ is
\begin{equation}\label{eq: tan}
  \mathrm{Tan}_{\mu}= L^2(\mu):=\left\{ v:\Omega\to\mathbb R^d: \, \|v\|^2_{L^2(\mu)}:=\int_\Omega \|v(x)\|^2 \, d\mu(x)<\infty \right\}.  
\end{equation}

\label{sec:Optimal_Transport}

\noindent\textbf{{Sliced Wasserstein Distance.}} 
Solving the optimal transport problem for dimensions bigger than one is, in general, a computationally expensive procedure. An alternative but equivalent metric to $W_2$ can be obtained by reducing the problem into several one-dimensional transport minimizations. The Sliced Wasserstein distance between probability measures $\mu$ and $\nu$ on $\mathbb{R}^d$ 
consists of first obtaining a family of one-dimensional measures through projections of $\mu$ and $\nu$ (slicing the measures), then calculating one-dimensional Wasserstein distances, and finally averaging over all the projections. 
Precisely, given $\theta\in\mathbb{S}^{d-1}$ (the unit sphere in $\mathbb R^d$), consider the projection onto the $\theta$-direction: 
\begin{equation*}
    \theta^*:\mathbb{R}^d\to\mathbb{R}, \qquad
    \theta^*(x):=\langle x,\theta\rangle ,
\end{equation*}
where $\langle\cdot,\cdot\rangle$ denotes the usual inner product in $\mathbb R^d$. Then,  the measure $\theta^*_{\#}\mu\in\mathcal{P}(\mathbb{R})$ is called the slice of $\mu$ with respect to $\theta$. This operation is related to the generalization of the Radon Transform\footnote{The Radon transform maps a function  $f\in L^1(\mathbb{R}^d)$ into a function $\mathcal{R}f\in L^1(\mathbb{R}\times \mathbb{S}^{d-1})$. For $(t,\theta)\in \mathbb{R}\times \mathbb{S}^{d-1}$,  $\mathcal{R}f(t,\theta)$ is the surface integral of $f$ over the hyperplane orthogonal to $\theta$ that passes through $t\theta$:
$\mathcal{R}f(t,\theta)=\int_{\mathbb{R}^{d-1}}f(t\theta+U_\theta \xi) \, d\xi $
where $U_\theta\in\mathbb{R}^{d\times (d-1)}$ is any matrix such that its columns form an orthonormal set of vectors in $\mathbb{R}^d$ perpendicular to $\theta$.  } $\mathcal{R}$ for measures:  $\mathcal{R}\mu\in\mathcal{P}(\mathbb{R}\times \mathbb{S}^{d-1})$ is the measure that can be \textit{disintegrated} according to slices $[\mathcal{R}\mu](\cdot,\theta) := \theta^*_\# \mu $ (see the Appendix). In particular, if
$\mu$ has density $f_\mu\in L^1(\mathbb R^d)$, the identity $\mathcal{R}f_\mu(\cdot, \theta)=\theta_{\#}^*\mu$ holds in the sense of distributions\footnote{For every test function $\psi$,
$\int_{\mathbb{R}}\mathcal{R}f_\mu(t,\theta)\psi(t) \, dt=\int_{\mathbb{R}}\int_{\mathbb{R}^{d-1}}f_\mu(t\theta+U_\theta \xi) \, d\xi \psi(t) \, dt=\int_{\mathbb{R}^{d}} f_\mu(x) \psi(\langle x,\theta\rangle) \, dx=\int_{\mathbb{R}^d}\psi(\langle x,\theta\rangle) \, d\mu=
    \int_{\mathbb{R}^d}\psi(x) \, d\theta_\#^*\mu,
$
   where in the second identity we changed variables $(t,\xi)\mapsto x\in \mathbb{R}^d$ by noticing that $t=\langle t\theta+U_\theta \xi, \theta\rangle=\langle x,\theta\rangle$.}.
Finally, the Sliced Wasserstein metric
is defined as, 
\begin{eqnarray*}
    SW_2(\mu,\nu)=\left(\int_{\mathbb{S}^{d-1}} W^2_2(\theta^*_{\#}\mu,\theta^*_{\#}\nu) \ d\theta\right)^{\frac{1}{2}},
\end{eqnarray*}
where 
$d\theta$ is the uniform measure on the sphere $\mathbb{S}^{d-1}$. For  $d=2$, we will parameterize $\mathbb{S}^1$ with angles $\theta\in[0,2\pi)$ and, in fact, it would be necessary to consider only angles in $[0,\pi]$.

\begin{remark}[Equivalences between metrics]
 In $\mathcal{P}_2(\mathbb R^d)$, $SW_2 \leq
  W_2$, and in $\mathcal{P}(B(0,R))$, where $B(0,R)$ is the ball in $\mathbb R^d$ centered at the origin $0$ with radius $R>0$,  the Wasserstein distance and the Sliced Wasserstein distances are topologically equivalent. 
\end{remark}

\section{Embeddings or Transforms}
\label{sec:embeddings_transforms}
Optimal transport has been used to establish bijective nonlinear transformations and various transport-related metrics between measures, encompassing signals, images, point clouds, and more \cite{aldroubi2021signed,bai2022sliced, bai2023linear,martin2023lcot,kolouri2015radon,kolouri2016continuous,kolouri2016sliced,wang2010optimal,wang2013linear,beier2021linear}. Despite the nuances and technicalities of each transform, the core formulation is based on the so-called Linear Optimal Transport (LOT) transform. We will introduce the \textit{recipe} to define LOT and then adapt it for different scenarios. 
\subsection{Linear optimal transport (LOT) representation}

In \cite{wang2010optimal,wang2013linear},  a linear optimal transport (LOT) framework was proposed for efficiently analyzing large databases of images. Their main contributions were the introduction of a new transport-related distance (LOT-distance) and an embedding (LOT transform) of the space of probability distributions into a linear space. The key point was to represent convoluted manifolds of images as simpler spaces. 


Let us consider a measure $\mu_r\in\mathcal{P}_2(\Omega_r)$  absolutely continuous with respect to the Lebesgue measure that we will call the \textit{reference}. 
Given a second measure $\nu\in\mathcal{P}_2(\Omega)$, there is a unique optimal Monge map $T_{\mu_r\to \nu}$. By Theorem \ref{thm: Brenier}, it is characterized as the unique map satisfying $(T_{\mu_r\to \nu})_\# \mu_r= \nu$ and  $T_{\mu_r\to \nu}=\nabla \varphi$ for some convex function $\varphi$. Thus, the application 
\begin{equation*}\label{eq: LOT application}
    \nu \to \widehat{\nu}:=T_{\mu_r\to\nu}
\end{equation*}
is a one-to-one and onto map between $\mathcal{P}_2(\Omega)$ and $L_{G}^2(\mu_r)$ that  will be called the \textit{LOT transform} (or \textit{LOT embedding}, due to its injectivity property). Its inverse transform is the application $T\to T_\# \mu_r$, which  by definition satisfies $\nu = \widehat{\nu}_\# \mu_r$.
\begin{svgraybox}
\noindent {\textbf{Recipe that defines the LOT transform}: 
\begin{itemize}
    \item Choose a reference measure $\mu_r\ll\mathcal{L}$.
    \item Given $\nu\in\mathcal{P}_2(\Omega)$, Find $T_{\mu_r\to\nu} = \nabla \varphi$ (with $\varphi$ convex) such that $(T_{\mu_r\to\nu})_\#\mu_r = \nu$, and define the LOT transform of $\nu$ as
    $\widehat{\nu}:=T_{\mu_r\to\nu}$.
    \item The inverse LOT transform is given by $T\mapsto \nu := T_\#\mu_r$.
\end{itemize}    }
\end{svgraybox}
\noindent Since $L^2_{G}(\mu_r) \subset L^2(\mu_r)$ and the LOT transform is 1-to-1, we can pull-back the metric from $L^2(\mu_r)$ to $\mathcal{P}_2(\Omega)$ and the define the \textit{LOT-distance} as 
   \begin{align*}
        d_{LOT}^{2}(\nu_1,\nu_2) &:= \|\widehat{\nu_1}-\widehat{\nu_2}\|_{L^2(\mu_r)}.
    \end{align*}



\noindent\textbf{Geometric interpretation:} Recall that the tangent space Tan$_{\mu_r}$ of the Wasserstein manifold $(\mathcal{P}(\Omega_r),W_2)$ at  $\mu_r$ is the linear space $L^2(\mu_r)$. Given any measure $\nu_i$, we can associate a vector in Tan$_{\mu_r}$ by taking the initial velocity field of the geodesic joining $\mu_r$ and $\nu_i$, which is exactly $v_0^i(x)=\widehat{\nu_i}(x) - \mathrm{id}(x)$. The LOT-distance between $\nu_1,\nu_2$ coincides with the distance between their associated vectors $v_0^1,v_0^2 \in \mathrm{Tan}_{\mu_r}$:
\begin{align}\label{eq: tangent vectors}
    \|\widehat{\nu_1}-\widehat{\nu_2}\|_{L^2(\mu_r)} &= \|(\widehat{\nu_1}-\mathrm{id}) - (\widehat{\nu_2}- \mathrm{id})\|_{L^2(\mu_r)}\notag \\
    &= \|v_0^1 - v_0^2 \|_{L^2(\mu_r)} = \|v_0^1 - v_0^2 \|_{\text{Tan}_{\mu_r}}. 
\end{align}
This motivates the name \textit{``Linear Optimal Transform''}, as the LOT-distance can be viewed as a linearized version of the OT-distance (i.e., a linearized version of the Wasserstein distance $W_2$), since it corresponds to the metric in a tangent space of the Wasserstein manifold. We refer the reader to Figure \ref{fig: tangent} for an illustration.

\begin{figure}[h!]
    \centering
    \includegraphics[width=\linewidth]{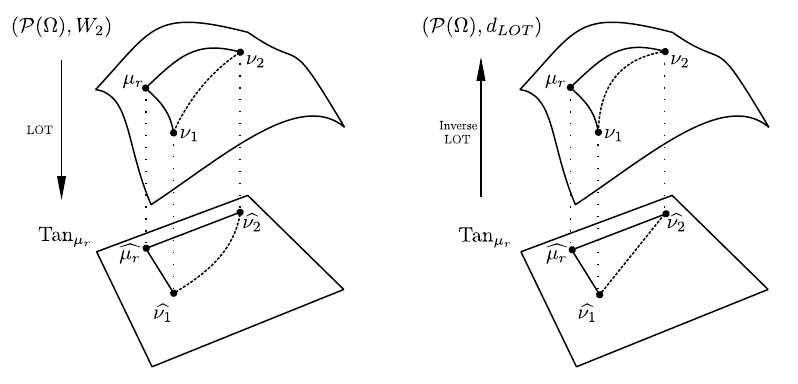}
    \caption{{Illustration of the LOT-transform, its inverse, and the LOT-distance. Let $\Omega\subset\mathbb R^d$ be convex and compact, then the LOT transform can be interpreted as the embedding of the Wasserstein manifold $(\mathcal{P}(\Omega),W_2)$ into the tangent space 
    $(\mathrm{Tan_{\mu_r}},\|\cdot\|_{L^2(\mu_r)})$
    at the \textit{reference} measure $\mu_r$. The LOT-distance is defined as the distance between vectors in such tangent space as shown in \eqref{eq: tangent vectors} ($d_{LOT}(\nu_1,\nu_2)=\|\widehat{\nu_1}-\widehat{\nu_2}\|_{{L^2(\mu_r)}}$). It follows that $d_{LOT}(\cdot,\cdot)$ preserves the Wasserstein metric when one of the measures is the reference, that is, $d_{LOT}(\mu_r,\nu_i)=W_2(\mu_r,\nu_i)$, for $i=1,2$, but in general, $d_{LOT}(\nu_1,\nu_2)\not=W_2(\nu_1,\nu_2)$. Similarly, the geodesic between $\mu_r$ and $\nu_i$ ($i=1,2$) in $(\mathcal{P}(\Omega), W_2)$ goes to a line-segment (geodesic) when applying the LOT-transform}, but the $W_2$-geodesic between general measures $\nu_1$ and $\nu_2$ is not preserved, in general, after applying LOT-transform (does not go, in general, to a line-segment in the transformed space). Reversely, line-segments (geodesics) in the LOT-transformed space (equivalently, in the tangent space $\mathrm{Tan}_{\mu_r}$) between $\widehat{\mu_r}$ an $\widehat{\nu_i}$ (for $i=1,2$) go to geodesics in the Wasserstein manifold after applying the LOT-inverse transform, but in general, line-segments (LOT-geodesics) between arbitrary `tangent vectors' $\widehat{\nu_1}$ and $\widehat{\nu_2}$ do not correspond with geodesics in the Wasserstein manifold after applying the LOT-inverse transform.}
    \label{fig: tangent}
\end{figure}

\begin{remark}\label{remark: gen LOT}
   When the reference measure $\mu_r$ is not absolutely continuous with respect to the Lebesgue measure, there is no guarantee of the existence of an optimal transport map.
    The proper extension of the LOT distance in the general case is as  \textit{the shortest generalized geodesic} connecting $\mu$ and $\nu$ \cite{ambrosio2005gradient}. We refer to\cite{wang2013linear} for the technical details.  
\end{remark}

It can be shown that the range of the LOT-transform, i.e., $L^2_G(\mu_r)$, is a convex set included in $L^2(\mu_r)$ (as convex combinations of gradients of convex functions is a gradient of a convex function). Since the latter is a geodesic space with geodesics given by `linear interpolation' (i.e., line-segments), we can copy this structure with the inverse LOT-transform. That is, with the LOT-distance, the geodesic joining $\nu_1$ and $\nu_2$ is $\rho_t:=\left((1-t)\widehat{\nu_1}+t\widehat{\nu_2}\right)_\#\mu_r$, for $0\leq t\leq 1$ which is the inverse of the geodesic (line-segment) $(1-t)\widehat{\nu_1}+t\widehat{\nu_2}$ (for $t\in[0,1]$) between $\widehat{\nu_1}$ and $\widehat{\nu_2}$ in the range of the transform.

In $\mathcal{P}_2(\Omega)$ we previously introduced the Wasserstein distance. Thus, it is natural to ask how $d_{LOT}$ compares to $W_2$. The answer is that, in general\footnote{If the reference $\mu_r$ is such that $\mu\ll\mathcal{L}$, given $\nu_1,\nu_2\in\mathcal{P}_2$, assume that that also $\nu_1\ll\mathcal{L}$ and that $T_{\mu\to\nu_1}$ is invertible, then by using the change of variables $y=T_{\mu\to\nu_1}(x)$, we have
$d_{LOT}^2(\nu_1,\nu_2)=\int \|T_{\mu\to{\nu_1}}(x)-T_{\mu\to{\nu_2}}(x)\|^2d\mu(x)=\int \|y-T_{\mu\to{\nu_2}}(T_{\mu\to{\nu_1}}^{-1}(y))\|^2d({T_{\mu\to{\nu_1}}}_\#\mu)(y)=\int \|y-T_{\mu\to\nu_2}(T_{\mu\to\nu_1}^{-1}(y))\|^2d\nu_1(y)\geq W_2^2(\nu_1,\nu_2)$, where the last inequality holds since ${(T_{\mu\to\nu_2}\circ T_{\mu\to\nu_1}^{-1})}_\#\nu_1=\nu_2$. In general, 
$d_{LOT}(\nu_1,\nu_2)\geq W_2(\nu_1,\nu_2)$ as the LOT transform comes from considering another transportation plan (as pointed in Remark \ref{remark: gen LOT}), not necessarily the optimal between $\nu_1$ and $\nu_2$.} , $d_{LOT}$ induces a finer topology  since it satisfies 
$W_2(\nu_1,\nu_2)\leq d_{LOT}(\nu_1,\nu_2)  $.
 For more details, we refer the reader to \cite{moosmuller2023linear}.


Often, the utility of different transforms is associated with how they behave under certain actions. 
The LOT transform is particularly suited for actions of certain groups of homeomorphisms of \(\mathbb{R}^d\) on the space of measures that preserve mass, i.e.,  utilizing the pushforward operation. Specifically, consider the group \( \mathcal{G}_{LOT} = \{g:\mathbb{R}^d \to \mathbb{R}^d, g(x) = ax + b: \, a\in\mathbb R_{>0}, b\in\mathbb R^d \} \) representing translations and isotropic scalings. The LOT transform translates actions in the domain of measures or the dependent variable into actions in the independent variable: The pushforward action \( g_\#\nu \) on measures is then transferred to the dependent variable by the rule \( \widehat{g_\#\nu} = g \circ \widehat{\nu} \). A clear example of this can be observed when considering $g(x) = x-\tau$ to be a pure translation. In this case, $g_\#\nu$ represents a translation in the dependant variable (i.e., a `horizontal' shift of the measure $\nu$), while its transform is $\widehat{\nu}+\tau$, that is, a translation in the independent variable (i.e., a `vertical' shift). 

A summary of the preceding discussions can be found in the following box. A similar summary will be given for the other transforms presented in this chapter. 

\begin{svgraybox}
    \textbf{PROPERTIES OF LOT TRANSFORM}
    \smallskip
    \noindent\textbf{Domain} $=\mathcal{P}(\Omega)$, where $\Omega\subset \mathbb R^d $ convex and compact. \textbf{Codomain} $=L^2(\mu_r)$.
    \smallskip
    
    \noindent\textbf{Range} $=L_G^2(\Omega_r)$.
    \textbf{Inverse}: $T\mapsto T_\#\mu_r$.
    \smallskip
    
    \noindent
    \textbf{Symmetry property}: 
    Consider the group of translations and isotropic scalings
    \begin{equation*}
      \mathcal{G}_{LOT}:=\left\{g:\mathbb R^d \to \mathbb R^d, g(x)=ax+b: \,   a\in\mathbb R_{>0}, b\in\mathbb R^d\right\}  
    \end{equation*}
    Then, for every $g\in\mathcal{G}_{LOT}$, $\nu\in\mathcal{P}(\Omega)$
                \begin{itemize}
                    \item (at measure level) $\widehat{g_\# \nu} = g\circ \widehat{\nu},$ 
                    \item (at density level) $\widehat{\mathrm{det}(Dg) \cdot (f_\nu \circ g)} = g^{-1}\circ \widehat{f_\nu}.$
                \end{itemize}
                \smallskip
    
    \noindent
    \textbf{Relation with other distances}: $d_{LOT}(\nu_1,\nu_2)\geq W_2(\nu_1,\nu_2)$ $\forall \nu_1,\nu_2\in\mathcal{P}(\Omega)$ \\ 
    and $d_{LOT}(\mu_r,\nu)= W_2(\mu_r,\nu)$ $\forall\nu\in\mathcal{P}(\Omega)$.\medskip
    
    \noindent
    \textbf{Geodesics}: $\rho_t=\left((1-t)\widehat{\nu_1}+t\widehat{\nu_2}\right)_\#\mu_r$, $0\leq t\leq 1$, is an $d_{LOT}$-geodesic between $\nu_1,\nu_2\in\mathcal{P}(\Omega)$ 
    (which is a $W_2$-geodesic if $\nu_i=\mu_r$ for some $i=1,2$). 
\end{svgraybox}

\subsection{Variants of LOT}

We will describe the variants of the LOT transform in Euclidean domains (measures defined on $\mathbb{R}^d$) or which we have a closed formula due to its relation with OT in one dimension: The Cumulative Distribution Transform (CDT), the Signed CDT (SCDT), the Radon CDT (RCDT), and the Signed RCDT (RSCDT). 

\begin{svgraybox}{ \vspace{-0.25in}
    \begin{itemize}
        \item When $d=1$, the LOT transform is called CDT.
        \item The extension of the CDT to signed measures is called SCDT.
        \item  For $d>1$, the variant of CDT that first pre-processes the signals by applying the Radon transform and then uses the CDT, is called RCDT.
        \item The extension of the RCDT to signed measures is called RSCDT.
    \end{itemize}
    \vspace{-0.25in}}
\end{svgraybox}

\noindent \textbf{One-dimensional probability measures: Cumulative Distribution Transform (CDT).}  Let $I=[a,b]$ be a real interval, and  consider $\nu \in \mathcal{P}(I)$. Following the \textit{LOT-recipe} and using \eqref{eq: gen inverse comp with cdf}, we fix continuous reference $\mu_r\in \mathcal{P}(\mathbb{R})$ (i.e., $\mu_r\ll\mathcal{L}$) and define the LOT transform of $\nu$ respect to $\mu$ and the LOT distance as 
\begin{equation}\label{eq: lot for 1d}
    \widehat{\nu} := T_{\mu_r\to\nu} = F^\dagger_\nu \circ F_{\mu_r}, 
\end{equation}
\begin{equation}\label{eq: d_lot 1d}
    d_{LOT}(\nu_1,\nu_2) = \|F^\dagger_{\nu_2} \circ F_{\mu_r} - F^\dagger_{\nu_1} \circ F_{\mu_r} \|_{L^2(\mu_r)}.
\end{equation}
The expression \eqref{eq: lot for 1d} is also called the \textit{Cumulative Distribution Transform (CDT) of $\nu$ with respect to $\mu_r$} \cite{park2018cumulative}. 
Nevertheless, by the change of variable formula, it results that the expression \eqref{eq: d_lot 1d} coincides with 
\begin{equation} \label{eq: d_cdt}
    \|F^\dagger_{\nu_2}  - F^\dagger_{\nu_1} \|_{L^2(\mathcal{L}_{[0,1]})}
\end{equation}
where $\mathcal{L}_{[0,1]}$ is the Lebesgue measure on $[0,1]$, that is, the uniform measure on $[0,1]$. In such a case, we will use the notation
$\|\cdot\|_{L^2(\mathcal{L}_{[0,1]})}=\|\cdot\|_{L^2({[0,1]})}$
This tells us that the LOT-distance in 1D is independent of the choice of the reference $\mu_r$. Moreover, the LOT transform achieves a simpler form when we use $\mathcal{L}_{[0,1]}$ as the reference. Therefore, we define the 1D-version of the LOT transform and the corresponding distance directly as follows.

\begin{definition}[CDT and CDT-distance]
    When $\nu \in \mathcal{P}(I)$, with $I=[a,b] \subset \mathbb{R}$, the LOT transform is called Cumulative Distribution Transform (CDT) and has the closed form     
    \begin{equation*}
        \widehat{\nu} := F_\nu^\dagger.
    \end{equation*}
    It is exactly the optimal Monge map from $\mathcal{L}_{[0,1]}$ to $\nu$. 
    By abuse of notation, when $\nu$ has a density $f_\nu$, the CDT transform can be defined at the density level as 
        $\widehat{f_\nu} =\widehat{\nu} = F_\nu^\dagger$.
    The CDT-distance $d_{CDT}:\mathcal{P}(I)\times\mathcal{P}(I) \to \mathbb{R}_{\geq 0}$ takes the form given in \eqref{eq: d_cdt}, that is,
    \begin{equation*}
        d_{CDT}(\nu_1,\nu_2)=\|F^\dagger_{\nu_2}  - F^\dagger_{\nu_1} \|_{L^2({[0,1]})}.
    \end{equation*}
\end{definition}

\begin{svgraybox}
    {\textbf{PROPERTIES OF CDT} 
    \smallskip
    
    \noindent\textbf{Domain} $=\mathcal{P}(I)$,  where $I=[a,b]\in \mathbb{R}$. \textbf{Codomain} $=L^2([0,1])$.
    \smallskip
    
    \noindent\textbf{Range}  $=\left\{T:[0,1]\to I  \text{ non-decreasing and right-continuous}\right\}$.
    \smallskip
    
    \noindent\textbf{Inverse}: $T\mapsto \nu := T_\#\mathcal{L}_{[0,1]}$, with $f_\nu = (T^\dagger)'$ if the density exists.\footnotemark 
    \smallskip
    
    \noindent\textbf{Symmetry property}: 
    Consider the group of homeomorphisms
    \begin{equation*}
      \mathcal{G}_{CDT}:=\left\{g:I\to I: \,  g \text{ is a strictly increasing bijection} \right\}  
    \end{equation*}
    Then, for every $g\in\mathcal{G}_{CDT}$, $\nu\in\mathcal{P}(I)$
    \begin{itemize}
        \item (at measure level) $\widehat{g_\# \nu} = g\circ \widehat{\nu},$
        \item (at density level) $\widehat{g' \cdot (f_\nu \circ g)} = g^{-1}\circ \widehat{f_\nu}.$
    \end{itemize}
    \smallskip
    
    \noindent\textbf{Relation with other distances}: $d_{CDT}(\nu_1,\nu_2) = W_2(\nu_1,\nu_2)$ $\forall \nu_1,\nu_2\in\mathcal{P}(I)$.
    \smallskip 
    
    \noindent\textbf{Geodesics}: $\rho_t=((1-t)\widehat{\nu_1}+t\widehat{\nu_2})_\#\mathcal{L}_{[0,1]}$, $0\leq t\leq 1$, is a $d_{CDT}$-geodesic between $\nu_1,\nu_2\in\mathcal{P}(I)$, which coincides with the Wasserstein geodesic.  
    }
\end{svgraybox}
\footnotetext{The derivative is in the sense of distributions. If $f_\nu$ is a pdf, then $\left(\widehat{f_\nu}^\dagger\right)^\prime=f_\nu.$}

\begin{theorem}[Characterizing Property.] \label{thm: characterization property 1d}
     If a transform $\, \widehat{\cdot}\, $ defined on $\mathcal{P}(I)$ is such that for some $\nu\ll\mathcal{L}$, we have  $\widehat{\nu}$ increasing and satisfying the symmetry property 
        \begin{equation}\label{eq: property ivan inverse}
g\circ\widehat{\nu}=\widehat{g_\#\nu} \qquad \forall g \text{ non-decreasing},         
        \end{equation}
        then $\, \widehat{\cdot} \, $ is the transform defined in \eqref{eq: lot for 1d}, that is, the CDT with respect to some reference $\mu_r\ll\mathcal{L}$. 
        $\text{[We refer the reader to the Appendix for the proof.]}$
\end{theorem}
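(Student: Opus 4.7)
The plan is to exploit the hypothesis $\nu \ll \mathcal{L}$ by writing every target $\eta \in \mathcal{P}(I)$ as a non-decreasing pushforward of $\nu$, so that the symmetry property \eqref{eq: property ivan inverse} determines $\widehat{\eta}$ from $\widehat{\nu}$ alone; the desired reference $\mu_r$ can then be read directly off from $\widehat{\nu}$.

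First, given any $\eta \in \mathcal{P}(I)$, I would set $g_\eta := F_\eta^\dagger \circ F_\nu$. This function is non-decreasing as a composition of non-decreasing functions, and by \eqref{eq: gen inverse comp with cdf} applied with $\nu$ as source (permissible since $\nu \ll \mathcal{L}$) it satisfies $(g_\eta)_\# \nu = \eta$. Applying the symmetry hypothesis to $g = g_\eta$ yields
\begin{equation*}
    \widehat{\eta} \;=\; \widehat{(g_\eta)_\# \nu} \;=\; g_\eta \circ \widehat{\nu} \;=\; F_\eta^\dagger \circ \bigl(F_\nu \circ \widehat{\nu}\bigr),
\end{equation*}
so the transform $\widehat{\,\cdot\,}$ is determined everywhere on $\mathcal{P}(I)$ by the single function $F_\nu \circ \widehat{\nu}$.

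The next step is to recognize this composition as the CDF of the sought reference. Setting $F_{\mu_r} := F_\nu \circ \widehat{\nu}$, I would verify it is a bona fide CDF: monotonicity comes from $F_\nu$ non-decreasing and $\widehat{\nu}$ increasing; the correct limits $0$ and $1$ follow from those of $F_\nu$ together with the range of $\widehat{\nu}$ exhausting the support of $\nu$; and right-continuity is inherited from the continuity of $F_\nu$ (which holds since $\nu \ll \mathcal{L}$) combined with the right-continuous representative of the monotone function $\widehat{\nu}$. Letting $\mu_r$ denote the probability measure with this CDF, the displayed identity above reads $\widehat{\eta} = F_\eta^\dagger \circ F_{\mu_r}$, which matches \eqref{eq: lot for 1d} for every $\eta \in \mathcal{P}(I)$, so $\widehat{\,\cdot\,}$ is the CDT with reference $\mu_r$.

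The main obstacle I foresee is establishing $\mu_r \ll \mathcal{L}$, since ``$\widehat{\nu}$ increasing'' alone does not rule out singular or flat behaviour that would make $F_{\mu_r}$ fail to be absolutely continuous. Because $F_\nu$ is absolutely continuous, it suffices to argue that $\widehat{\nu}$ itself is absolutely continuous; I would try to force this by probing the symmetry property against carefully chosen non-decreasing $g$—for instance, maps collapsing a purported flat interval of $\widehat{\nu}$ would produce a $\widehat{g_\#\nu}$ incompatible with the required structure, yielding a contradiction. Once absolute continuity of $\widehat{\nu}$ (hence of $F_{\mu_r}$) is in hand, the chain rule gives an explicit density for $\mu_r$ and completes the identification of $\widehat{\,\cdot\,}$ with the CDT relative to $\mu_r$.
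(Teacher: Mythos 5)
Your argument is correct and follows essentially the same route as the paper's: you write each target as a non-decreasing pushforward of $\nu$, apply the symmetry property, and read the reference off from $\widehat{\nu}$, your $F_{\mu_r}=F_\nu\circ\widehat{\nu}$ being exactly the CDF of the paper's reference $\mu_r=(\widehat{\nu}^\dagger)_\#\nu$. The absolute-continuity concern you flag at the end is genuine but is equally left implicit in the paper's own proof, which invokes Brenier-type uniqueness for $\mu_r$ without verifying $\mu_r\ll\mathcal{L}$.
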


\noindent \textbf{One dimensional signed measures. The Signed Cumulative Distribution Transform (SCDT).} Let $I=[a,b]$ be a real interval, and  consider $\nu \in \mathcal{M}(I)$ the space of signed measures over $I$. The Signed Cumulative Distribution Transform (SCDT) \cite{aldroubi2021signed} extends the CDT from $\mathcal{P}(I)$ to $\mathcal{M}(I)$. It consists of taking the Hahn-Jordan decomposition of a given measure $\nu = \nu^+ - \nu^-$, normalizing its components, and applying the CDT to each term separately. 

\begin{definition} 
    When $\nu \in \mathcal{M}(I)$ with $I=[a,b]\subset \mathbb{R}$, the SCDT is defined as
    \begin{equation*}
        \widehat{\nu} = \left(\widehat{\nu^{+N}},|\nu^+|,\widehat{\nu^{-N}},|\nu^-|\right),
    \end{equation*}
    where $\nu^{\pm N} := \nu^\pm/|\nu^\pm|$ are the normalized components of $\nu$. If $\nu^{+}\equiv 0$,  we define $\widehat{\nu}=(0,0,\widehat{\nu^{-N}},|\nu^-|)$, respectively for the case $\nu^{-}\equiv 0$, we define $\widehat{\nu}=(\widehat{\nu^{+N}},|\nu^+|,0,0)$, and if $\nu\equiv 0$, then $\widehat{\nu}=(0,0,0,0)$.  
    By abuse of notation, when $\nu$ has a `density' (i.e., Radon-Nicodym derivative) $f_\nu\in L^1(I)$,
    the transform can be defined at the density level as 
       $$ \widehat{f_\nu} =\widehat{\nu} = \left(\widehat{f_\nu^{+N}},\|f_\nu^{+}\|_1,\widehat{f_\nu^{-N}},\|f_\nu^{-}\|_1\right),$$
    where $f_\nu^{\pm N}=f_\nu^\pm/\|f_\nu^{\pm}\|_1$ and $f_\nu^+(x)=\max\{f(x),0\}$, $f_\nu^-(x)=\max\{-f(x),0\}$ and $\|f_\nu\|_1=\int|f_\nu(x)|dx$. As before, if $f_\nu^{\pm}\equiv 0$, we define $f_\nu^{\pm N}\equiv 0$ and $\widehat{f_\nu^{\pm N}}\equiv 0$.
    
    \noindent The SCDT-distance is defined as 
    \begin{align*}
        d^2_{SCDT}&(\nu_1,\nu_2) = \|\widehat{\nu_1}-\widehat{\nu_2}\|_{(L^2([0,1])\times \mathbb{R})\times (L^2([0,1])\times \mathbb{R})}^2\\
&=d_{CDT}^2(\nu_1^{+N},\nu_2^{+N}) + d_{CDT}^2(\nu_1^{-N},\nu_2^{-N}) +  \left |\nu_1^+ -  \nu_2^+\right|^2 +  \left |\nu_1^- -  \nu_2^-\right|^2\\        &=d_{CDT}^2(\nu_1^{+N},\nu_2^{+N}) + d_{CDT}^2(\nu_1^{-N},\nu_2^{-N}) +  \left( |\nu_1^+| -  |\nu_2^+|\right)^2 +  \left( |\nu_1^-| -  |\nu_2^-|\right)^2.
    \end{align*}
\end{definition}

\begin{svgraybox}
\textbf{PROPERTIES OF SCDT}\smallskip
    
    \noindent
    {\textbf{Domain} $=\mathcal{M}(I)$,  where $I=[a,b]\in \mathbb{R}$. \textbf{Codomain} $=\left(L^2([0,1]\times \mathbb R)\right)^2$.
    \smallskip
    
    \noindent\textbf{Range} $=\left\{(T,r,U,s)\in\mathcal{T}\times\mathcal{T}: \, T_\#\mathcal{L}_{|_{[0,1]}}\perp U_\#\mathcal{L}_{|_{[0,1]}} \right\}$,\\ 
    $\text{\quad \qquad where}$  $\mathcal{T}:=
        \mathrm{Range}({CDT})\times \mathbb{R}_{>0}\cup\left\{(0,0)\right\}$.\footnotemark
    \smallskip
    
    \noindent\textbf{Inverse}: $(T,r,U,s)\mapsto \nu: = r\left(T_\#\mathcal{L}_{[0,1]}\right)-s\left(U_\#\mathcal{L}_{[0,1]}\right)$\\
    $\text{\qquad \quad with}$
     $f_\nu =  r(T^\dagger)^\prime-s(U^\dagger)^\prime$ if the density exists.
    \smallskip
    
    \noindent
    \textbf{Symmetry property}: If $g\in \mathcal{G}_{CDT}$, then      
    \begin{itemize}
        \item (at measure level) $\widehat{g_\#\nu} = \left(g\circ \widehat{\nu^{+N}},|\nu^+|,g\circ \widehat{\nu^{-N}},|\nu^-|\right),$
        \item (at density level) $\widehat{g' \cdot (f_\nu \circ g)} = \left(g^{-1}\circ \widehat{f_\nu^{+N}},\|f_\nu^+\|_1,g^{-1}\circ \widehat{f_\nu^{-N}},\|f_\nu^-\|_1 \right)$.
    \end{itemize}

    \noindent
    \textbf{Relation with other distances}:
    $d^2_{SCDT}(\nu_1,\nu_2) =$
    
    \noindent$ =W_2^2(\nu_1^{+N},\nu_2^{+N}) + W_2^2(\nu_1^{-N},\nu_2^{-N}) +  \left(|\nu_1^+| -  |\nu_2^+|\right)^2 +  \left( |\nu_1^-| -  |\nu_2^-|\right)^2$.
    \medskip
    
    \noindent
    \textbf{Geodesics}: $(\mathcal{M}(I), d_{SCDT})$ is not a geodesic space (see \cite[Thm 2.5]{li2022geodesic}: as a counterexample consider $f_{\nu_1}=\mathbbm{1}_{[-1,0]}-\mathbbm{1}_{[0,1]}$, and $f_{\nu_2}=-f_{\nu_1}$). 
        However, given non-null finite \textit{positive} measures $\nu_1,\nu_2\in \mathcal{M}_{+}(I)$, then $\rho_t:=\left((1-t)|\nu_1|+t|\nu_2|\right)\left[\left((1-t)\widehat{\nu_1^N}+t\widehat{\nu_2^N}\right)_\#\mathcal{L}_{[0,1]}\right]$,
    where $\nu_i^N=\nu_i/|\nu_i|$ ($i=1,2$), is a geodesic in $\mathcal{M}_{+}(I)$ with respect to $d_{SCDT}$ (see the Appendix). }
\end{svgraybox}
\footnotetext{The first coordinate of the tuple $(0,0)$  denotes the function identically zero and the second coordinate denotes the real number zero.}

\noindent\textbf{Two-dimensional densities. The Radon Cumulative Distribution (RCDT).} 
When $\nu \in \mathcal{P}_2(\Omega)$ with $\Omega\subset \mathbb{R}^2$, to take advantage of the closed formula of the CDT, one can use the Radon transform to project to one-dimensional measures and then applying the CDT on each projection. This gives rise to the Radon Cumulative Distribution Transform (RCDT) \cite{kolouri2015radon}.


\begin{definition}
    When $\nu \in \mathcal{P}(\Omega)$ with $\Omega\subset \mathbb{R}^2$ compact,  the RCDT is defined as
    \begin{align}\label{eq: rdct}
    \widehat{\nu}^{RCDT}(\xi,\theta) &=  \widehat{[\mathcal{R}\nu] (\cdot, \theta)} (\xi)=\widehat{\theta_\#^*\nu}(\xi)=F_{\theta_\#^*\nu}^\dagger(\xi) \quad \theta \in [0,\pi], \xi\in [0,1]
\end{align}
where in the RHS of \eqref{eq: rdct} we are considering the CDT of $[\mathcal{R}\nu] (\cdot, \theta)=\theta_\#^*\nu\in \mathcal{P}(\mathbb R)$. By abuse of notation, when $\nu$ has a density $f_\nu$ the transform can be defined at the density level as
$\widehat{f_\nu}^{RCDT}(\xi,\theta)=\widehat{[\mathcal{R}f_\nu] (\cdot, \theta)} (\xi)$, where $\mathcal{R}f_\nu$ is the classical Randon transform of $L^1$-functions. 
The RCDT-distance is defined as
\begin{align}\label{eq: RCDT dist}
    d_{RCDT}^2(\nu_1,\nu_2)&=\|\widehat{\nu_1}^{RCDT}-\widehat{\nu_2}^{RCDT}\|_{L^2([0,1]\times [0,\pi])}^2\\
    &= \frac{1}{\pi}\int_{0}^\pi\int_0^1 |\widehat{\nu_1}^{RCDT}(\xi,\theta)-\widehat{\nu_2}^{RCDT}(\xi,\theta)|^2 d\xi d\theta\notag
    \end{align}
\end{definition}

For simplicity, we are considering $\Omega\subset \mathbb R^2$ compact. Thus, there exists $R>0$ such that $\Omega\subset B(0,R)$ and therefore, for each $\theta \in \mathbb S^1$, we have $\theta_\#^*\nu\in \mathcal{P}([-R,R])$. Therefore, without loss of generality,  we will assume  $\Omega =B(0,R)$ by extending any function or measure by zero outside its original domain. 



When working with the RCDT we often push measures to measures in a slice-to-slice manner. 
To do so, we start with a family of maps indexed by the angle $\theta$ that we express as a function of two variables $T(t,\theta)$. Yet, if we want to push slices of a two dimensional measure $\mu$ into slices of another two dimensional measure, we cannot do $T_\# \mu$ since the output would be a one dimensional measure.
Thus, we need to introduce a new notation: Given $T:\mathbb R\times [0,\pi] \to \mathbb R$ such that  $\forall \theta, T(\cdot,\theta)$ is  non-decreasing, we define
\begin{equation}\label{eq: LT}
        L^T (t,\theta):=\left(T(t,\theta),\theta\right)
\end{equation} 
which can be visualized in Figure \ref{fig: LT}. 
While the function $T$ above encodes how to move each slice, $L^T$ adds a second component encoding that slices remain at the same angle level. That is, 
if by abuse of notation  $
\mu(\cdot,\theta)$ denotes the slice at angle $
\theta$ (see the Appendix for the precise definition), then 
$L^T$ pushes slices $\mu(\cdot, \theta)$ to slices of $L^T_\# \mu$ by the rule $(L^T_\# \mu)(\cdot,\theta) = T(\cdot,\theta)_\# \mu(\cdot,\theta)$.

\begin{figure}[h!]
    \centering
    \includegraphics[width=0.85\linewidth]{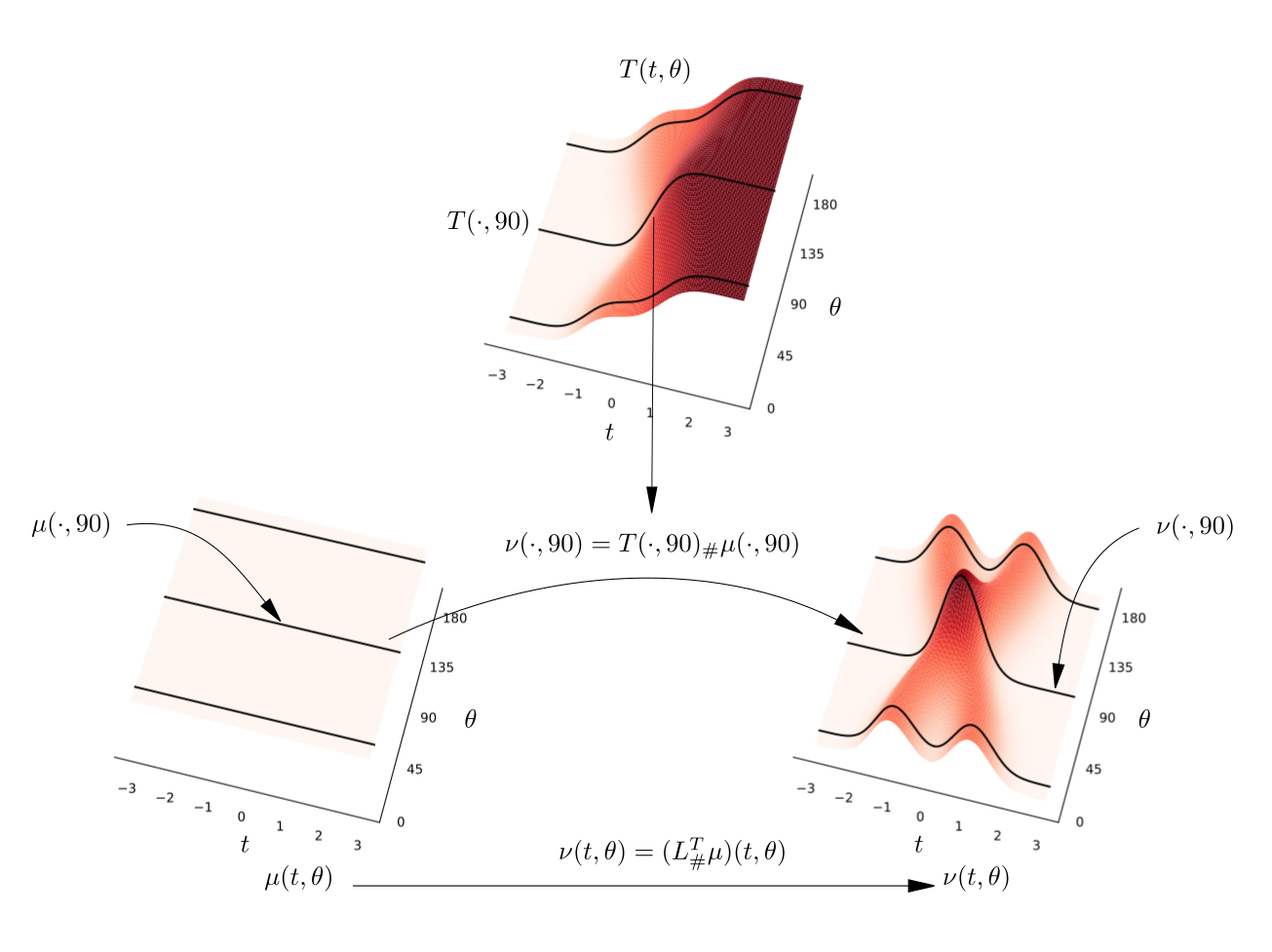}
    \caption{Visualization of the function $L^T$ defined by \eqref{eq: LT} acting as a pushforward of the measure $\mu$: For each $\theta\in[0,\pi]$, it pushes forward the $\theta$-slice of $\mu$ by $T(\cdot,\theta)$ resulting in the measure $\nu$.}
    \label{fig: LT}
\end{figure}

\begin{svgraybox}{\textbf{PROPERTIES OF RCDT}
\smallskip

\noindent\textbf{Domain} $=\mathcal{P}(\Omega)$, where $\Omega=B(0,R)\subset \mathbb R^2$. \textbf{Codomain} $=L^2([0,1]\times [0,\pi])$.\smallskip

\noindent
 \textbf{Range}  $=$ Measurable functions $T:[0,1]\times [0,\pi] \to [-R,R]$ such that $\forall \theta, T(\cdot,\theta)$ is  non-decreasing and
$L^T_\#\mathcal{L}_{|_{[0,1]\times [0,\pi]}}\in \mathrm{Range}(\mathcal{R})$.
\medskip
 

\noindent
 \textbf{Inverse}: $T\mapsto  \mathcal{R}^{-1} \left(L^T_\#\mathcal{L}_{|_{[0,1]\times [0,\pi]}}\right)$ or  
 $T\mapsto  \mathcal{R}^{-1} h$, $ \, h(\xi,\theta) := (T(\cdot,\theta)^\dagger)' (\xi)$.
 \medskip
 
\noindent  \textbf{Symmetry property}: 
Consider the group of slice homeomorphisms
    \begin{align*}
      \mathcal{G}_{RCDT}:=\{&g:[-R,R]\times [0,\pi] \to [-R,R]: \, \forall \theta\in[0,\pi],\\ 
  &g_\theta(\xi)=g(\xi,\theta) \text{ is a strictly increasing bijection} \}  
    \end{align*}
For every $g\in\mathcal{G}_{RCDT}$, $\nu\in\mathcal{P}(\Omega)$, and pdf $f_\nu$,
        \begin{itemize}
            \item (at measure level)  
            let $\nu^g =\mathcal{R}^{-1}(L^g_\#(\mathcal{R}(\nu)))$
            then,    
            \begin{equation*}
                \widehat{\nu^g}^{RCDT}(\xi,\theta) = g_\theta(\widehat{\nu}^{RCDT}(\xi,\theta))=g\left(  \widehat{\nu}^{RCDT}(\xi,\theta),\theta\right).
            \end{equation*}
            \item (at density level) let $f_\nu^g=\mathcal{R}^{-1}\left(\mathrm{det}(DL^g)\cdot\mathcal{R}(f_\nu)\circ L^g\right)$, then
            \begin{equation*}
                \widehat{f_\nu^g}^{RCDT}(\xi,\theta) = g_\theta^{-1}  \left(\widehat{f_\nu}^{RCDT}(\xi,\theta)\right).
            \end{equation*}
        \end{itemize}
        \smallskip
        
\noindent\textbf{Relation with other distances}: 
\smallskip

\noindent $d^2_{RCDT}(\nu_1,\nu_2)=\frac{1}{\pi}
    \int_{0}^\pi  d_{CDT}^2\left({\mathcal{R}\nu_1(\cdot,\theta)},{\mathcal{R}\nu_2(\cdot,\theta)}\right) d\theta= SW_2^2(\nu_1,\nu_2)$.
    
    }
\end{svgraybox}

\begin{remark}[Translations and isotropic dilations]
    In particular, when working with translations and anisotropic dilations, the symmetry property reads as follows:
    \begin{itemize}
        \item Let $b\in\mathbb{R}^2$ be a shift parameter, and consider $\tau_b(x):=x+b$ for all $x\in\mathbb R^2$, then
    $$\widehat{(\tau_b)_\#\nu}^{RCDT}(\xi,\theta)=\widehat{\nu}^{RCDT}(\xi,\theta)+ \theta^*(b).$$
    \item Let $a\in\mathbb{R}_{>0}$ be a scaling parameter, and consider $D_a(x):=ax$ for all $x\in\mathbb R^2$ and $d_a(\xi):=a\xi$ for all $\xi\in\mathbb R$, then
    $$\widehat{(D_a)_\#\nu}^{RCDT}=d_a\circ\widehat{\nu}^{RCDT}.$$
    \end{itemize}
    
\end{remark}

\begin{remark}\label{remark: rcdt not depend ref}
In the literature (see, for e.g., \cite{kolouri2015radon,gong2023radon}), the definition of the RCDT and the RDCT-distance is often written in a more general way, by taking a reference measure $\mu_r\in\mathcal{P}(B(0,R))$. Such a new distance coincides with the given RCDT-distance in \eqref{eq: RCDT dist}. See the Appendix for a detailed discussion.   
\end{remark}

\noindent\textbf{Two-dimensional signed measures. The Radon Signed Cumulative Distribution Tranform (RSCDT).} 
For simplicity, consider $\Omega=B(0,R)$ the ball centered at zero and with radius $R$ in $\mathbb R^2$. The Radon Signed Cumulative Distribution Transform (RSDCT) \cite{gong2023radon} extends the RCDT from $\mathcal{P}(\Omega)$ to $\mathcal{M}(\Omega)$. It consists of taking the Radon transform and then SCDT for each projection angle.

\begin{definition}
    When $\nu \in \mathcal{M}(\Omega)$ with $\Omega\subset \mathbb{R}^2$ compact,  the RSCDT is defined as
    \begin{align*}\label{eq: rsdct}
        \widehat{\nu}^{RSCDT}&(\xi,\theta) =  \widehat{[\mathcal{R}\nu] (\cdot, \theta)} (\xi)=\widehat{\theta_\#^*\nu}(\xi)\\
        &=\left( \widehat{[\mathcal{R}\nu] (\cdot, \theta)^{+N}}(\xi),|[\mathcal{R}\nu] (\cdot, \theta)^{+}| ,\widehat{[\mathcal{R}\nu] (\cdot, \theta)^{-N}}(\xi) , |[\mathcal{R}\nu] (\cdot, \theta)^{-}|\right)\\
        &=\left( \widehat{[\theta_\#^*\nu]^{+N}}(\xi),|[\theta_\#^*\nu] ^{+}| ,\widehat{[\theta_\#^*\nu]^{-N}} (\xi), |[\theta_\#^*\nu] ^{-}|\right) \quad \forall \theta \in [0,\pi], \xi\in [0,1]
    \end{align*}
where the $ \, \widehat{\cdot}\, $ used in the first equality is the SCDT, while in the last equality, it is the CDT\footnote{We note that, even though $    \theta_\#^*\nu=\theta_\#^*(\nu^+)-\theta_\#^*(\nu^-)$, it doesn't hold that $\theta_\#^*(\nu^\pm)=(\theta_\#^*\nu)^\pm$ since the projections $\theta_\#^*(\nu^+)$ and $\theta_\#^*(\nu^-)$ are not necessarily mutually singular.}. 
As for the SCDT, if $[\theta_\#^*\nu]^\pm=0$, then we define $[\theta_\#^*\nu]^{\pm N}=0$ and $\widehat{[\theta_\#^*\nu]^{\pm N}}=0$.
By abuse of notation, when $\nu$ has a `density' (i.e., a Radon-Nicodym derivative) $f_\nu\in L^1(\Omega)$ the transform can be defined at the density level as
\begin{align*}
  &\widehat{f_\nu}^{RSCDT}(\xi,\theta)=\widehat{[\mathcal{R}f_\nu] (\cdot, \theta)} (\xi)\\
  &=\left( \widehat{[\mathcal{R}f_\nu] (\cdot, \theta)^{+N}},\|[\mathcal{R}f_\nu] (\cdot, \theta)^{+N}\|_{L^1(\mathbb R)} ,\widehat{[\mathcal{R}f_\nu] (\cdot, \theta)^{-N}} , \|[\mathcal{R}f_\nu] (\cdot, \theta)^{-N}\|_{L^1(\mathbb R)}\right),  
\end{align*}
where $\mathcal{R}f_\nu$ is the usual Radon transform defined for $L^1$-functions.

\noindent The RSCDT-distance is defined as
\begin{align*}
    &d_{RSCDT}^2(\nu_1,\nu_2)
    =\|\widehat{\nu_1}^{RSCDT}-\widehat{\nu_2}^{RSCDT}\|_{(L^2([0,1]\times [0,\pi]))^4}^2\\
    &= \frac{1}{\pi}\int_{0}^\pi\int_0^1 |\widehat{\nu_1}^{RSCDT}(\xi,\theta)-\widehat{\nu_2}^{RSCDT}(\xi,\theta)|^2 d\xi d\theta\notag\\
    &= \frac{1}{\pi}\int_{0}^\pi\int_0^1 |\widehat{[\theta_\#^*\nu_1]^{+N}}(\xi)-\widehat{[\theta_\#^*\nu_2]^{+N}}(\xi)|^2 +|\widehat{[\theta_\#^*\nu_1]^{-N}}(\xi)-\widehat{[\theta_\#^*\nu_2]^{-N}}(\xi)|^2 d\xi d\theta  \notag\\
    &\quad+ \frac{1}{\pi}\int_{0}^\pi\int_0^1 |[\theta_\#^*\nu_2]^{+}-[\theta_\#^*\nu_2]^{+}|^2+ |[\theta_\#^*\nu_2]^{-}-[\theta_\#^*\nu_2]^{-}|^2d\theta d\xi .
    \end{align*}
\end{definition}

\noindent Given a non-negative function $r:[0,\pi]\to\mathbb R_{\geq 0}$, let us denote by $\mathcal{L}_{|_{[0,1]\times [0,\pi]}}^{r}$ the measure supported on $[0,1]\times [0,\pi]$ with density $(t,\theta)\mapsto r(\theta)$. This and the notation used for RCDT allow us to summarize the properties of the RSCDT as follows.

\begin{svgraybox}{\noindent \textbf{PROPERTIES OF RSCDT }\smallskip

\noindent \textbf{Domain} $=\mathcal{M}(\Omega)$,  $\Omega=B(0,R)\subset \mathbb R^2$. \textbf{Codomain} $=\left(L^2([0,1]\times[0,\pi])\right)^4$.
\smallskip

\noindent \textbf{Range}  $=$
4-tuple $(T,r,U,s)$ where $(T,r)$ is a pair of measurable functions $T:[0,1]\times [0,\pi]\to [-R,R]$, $r:[0,\pi]\to \mathbb R_{\geq 0}$  such that for each $\theta$
$ \left[T(\cdot,\theta) \text{ is non-decreasing and } r(\theta)>0\right]
             \text{or } \left[T(\cdot,\theta)=0 \text{ and } r(\theta)=0\right]$ (analogously for $(U,s)$), with the conditions: $\forall\theta$, $(T(\cdot,\theta))_\#\mathcal{L}_{|_{[0,1]}}\perp (U(\cdot,\theta))_\#\mathcal{L}_{|_{[0,1]}}$ and also $L^T_\#\mathcal{L}_{|_{[0,1]\times [0,\pi]}}^r- L^U_\#\mathcal{L}_{|_{[0,1]\times [0,\pi]}}^s\in \mathrm{Range}(\mathcal{R})$.
             
\medskip

\noindent \textbf{Inverse}: $(T,r,U,s)\mapsto  \mathcal{R}^{-1} \left( L^T_\#\mathcal{L}_{|_{[0,1]\times [0,\pi]}}^r- L^U_\#\mathcal{L}_{|_{[0,1]\times [0,\pi]}}^s\right)$ or 
\smallskip

\noindent $(T,r,U,s)\mapsto  \mathcal{R}^{-1} (h)$,  $h(\xi,\theta) :=r(\theta)(T(\cdot,\theta)^\dagger)' (\xi) -s(\theta)(U(\cdot,\theta)^\dagger)' (\xi)$.
\medskip

\noindent
  \textbf{Symmetry property}: If $g\in \mathcal{G}_{RCDT}$, then 
        \begin{itemize}
            \item (at measure level) given $\nu\in\mathcal{M}(\Omega)$, 
            let $\nu^g =\mathcal{R}^{-1}(L^g_\#(\mathcal{R}(\nu)))$ and
            assume $\widehat{\nu}^{RSCDT}(\xi,\theta) = (T(\xi,\theta),r(\theta),U(\xi,\theta),s(\theta))$, then 
            \begin{equation*}
                \widehat{\nu^g}^{RSCDT}(\xi,\theta) = (g(T(\xi,\theta),\theta),r(\theta),g(U(\xi,\theta),\theta),s(\theta)).
            \end{equation*}
            \item (at density level) given a $f_\nu\in L^1(\Omega)$, $f_\nu^g=\mathcal{R}^{-1}\left(\mathrm{det}(DL^g)\cdot\mathcal{R}(f_\nu)\circ L^g\right)$ and
            assume $\widehat{f_\nu}^{RSCDT}(\xi,\theta) = (T(\xi,\theta),r(\theta),U(\xi,\theta),s(\theta)),$ then
            \begin{equation*}
                \widehat{f_\nu^g}^{RSCDT}(\xi,\theta) = (g_\theta^{-1}(T(\xi,\theta)),r(\theta),g_\theta^{-1}(U(\xi,\theta)),s(\theta)).
            \end{equation*}
        \end{itemize}

\noindent
\textbf{Relation with other distances}: 
\smallskip

\noindent$d^2_{RSCDT}(\nu_1,\nu_2)=\frac{1}{\pi}\int_0^\pi d^2_{SCDT}(\mathcal{R}\nu_1(\cdot,\theta), \mathcal{R}\nu_1(\cdot,\theta)) \ d\theta$.
    }
\end{svgraybox}

\subsection{Numerical methods and software}

Numerical methods for computing the Monge and Kantorovich versions, based on PDEs and linear programming have existed for decades (an excellent repository can be found in the Python Optimal Transport (POT) library \cite{flamary2021pot}). A detailed description of these is omitted for brevity and is beyond the scope of this document. We do note that continuous formulations based on the solution to the Monge-Ampere equations, as well as constrained minimization methods, exist. For the discrete problem, traditional methods based on linear programming have recently been enhanced through the use of entropy-based regularization methods, facilitating a quick solution. A more complete discussion can be found in \cite{cuturi2013sinkhorn, kolouri2017optimal,peyre2019computational}. As far as the transport transforms discussed above derived from the LOT concept, software for computing the discrete \cite{wang2013linear} and continuous \cite{kolouri2016continuous} versions is available in the PyTransKit software package \cite{pytranskit}. Software for computing the CDT, SCDT, RCDT, RSCDT using numerical integration is also available in the PyTranskit package \cite{pytranskit}.

\section{Applications}\label{sec: app}


The transport-based signal transformation framework has been used to generate state of the art results in numerous applications in data classification  \cite{park2018multiplexing,neary2021transport,shifat2021radon,kundu2018discovery,basu2014detecting,rubaiyat2021nearest,liao2021multi,guan2019vehicle,lee2021identifying,kolouri2020wasserstein,kundu2020enabling,ozolek2014accurate, tosun2015detection,roy2020recognizing,shifat2020cell,zhuang2022local,thorpe2017transportation,rabbi2022invariance,kolouri2016radon,kolouri2016continuous,kolouri2016sliced,kolouri2018sliced,kolouri2019generalized,kolouri2017optimal}, as well as signal estimation \cite{rubaiyat2020parametric,nichols2019time}, communications \cite{park2018multiplexing,neary2021transport}, image formation in turbulence \cite{nichols2018transport}, optics \cite{nichols2019transport,nichols2021vector}, particle and high energy physics \cite{cai2020linearized}, system identification \cite{rubaiyat2024data}, structural health monitoring \cite{wang2020fault,zhang2024combining,rubaiyat2020parametric}, reduced order modeling solutions of PDEs \cite{ren2021model}, and others \cite{basu2014detecting,rahimi2017automatic,cloninger2023linearized}. Here we focus our description on three broad areas: signal estimation, classification, and transport-based morphometry.

\subsection{Linearizing estimation problems}
Numerous applications (e.g. radar, communications) require us to estimate a certain signal characteristic or parameter that provides information about the physical environment. Typically, signal parameters of interest include amplitude, frequency, time delay (or spatial shift in the case of images). Consider for example the radar problem where a known signal $f(t)$ (an electromagnetic waveform) is emitted aimed at a moving object. The object reflects the wave and the received signal is of the form
\[ h(t)=\alpha f(\omega t - \tau) + \eta(t) \]
where $\eta(t)$ is a noise process, $\alpha$ refers to an arbitrary (i.e., calibration) amplitude parameter, $\omega$ a frequency shift, and $\tau$ a certain time delay. Recovering the unknown parameters $\omega, \tau$ would give us information about the object's velocity and position (the same principle is used to localize sources, such as a cell phone), and it is what is known as an estimation problem. 

We can note that the parameter $\alpha$ is unrelated to the quantities of interest (position and speed of the object), and can be arbitrary (i.e., affected by the material properties of the reflecting object, calibration of the system, and so on). One way to deal with it so as not to affect the analysis is to simply work with the normalized energy density of the signal $h^2/\|h\|_2^2$ (and normalizing the known template $f$ the same way) as discussed in \cite{nichols2019time,rubaiyat2020parametric}. 
Assuming both $f$ and $h$ have been converted into energy densities, we can now rewrite the estimation problem as searching for the transformation  $g(t) = \omega t-\tau$ such that $f^g(t) = g'(t) f (g(t)) =\omega f(\omega t-\tau ) \sim h(t)$. 
To do so, it is common to establish a metric $d(\cdot,\cdot)$ between the received signal and parametric model  and define the solution to the estimation problem as 
\[ \min_g d^2(f^g,h)\]
When $d^2(f^g, h) = \| f^g - h\|_2^2$ this problem is typically nonlinear and non-convex. 
Numerous approaches for finding the solution to such estimation problems have been proposed including cross-correlation, ESPRIT, MUSIC, and others (see, for example, \cite{jakobsson1998subspace}). 
Another commonly used approach is to locate the peak of Wide-band Ambiguity Function (WBAF) $A_{f^g,h}(\omega, \tau)$ between the measured signal $h$ and the known signal $f$  \cite{jin1995estimation,niu1999wavelet,tao2008two}, 
which is nonlinear, non-convex, and thus non-trivial to optimize.

As an alternative, 
\cite{rubaiyat2020parametric,nichols2019time} proposed to utilize the Wasserstein metric with the aid of the Cumulative Distribution Transform \cite{park2018cumulative}. The idea is to note that, using the symmetry property of the CDT, 
$$d^2_{CDT}(f^g,h) = \| \widehat{f^g} - \widehat{h}  \|_2^2 = \| g^{-1}\circ\widehat{f} - \widehat{h}  \|_2^2=\left\|\frac{1}{\omega}\widehat{f}+\frac{\tau}{\omega}-\widehat{h}\right\|_2^2.$$ Renaming 
$a=1/\omega$, $b=\tau/\omega$, we can optimize for
$$\min_{a,b}\|a\widehat{f}+b-\widehat{h}\|_2^2$$
which we can note is a quadratic function on the unknown parameters $(a,b)$. We can thus minimize it with the well-known \textbf{linear} least squares technique, greatly simplifying the procedure. References \cite{nichols2019time,rubaiyat2020parametric} have extensive analysis with respect to noise and comparison between numerous techniques. Figure \ref{fig:estimation_fig} gives an overview of the simplification.

\begin{figure}[h!]
    \centering
    \includegraphics[width=0.65\linewidth]{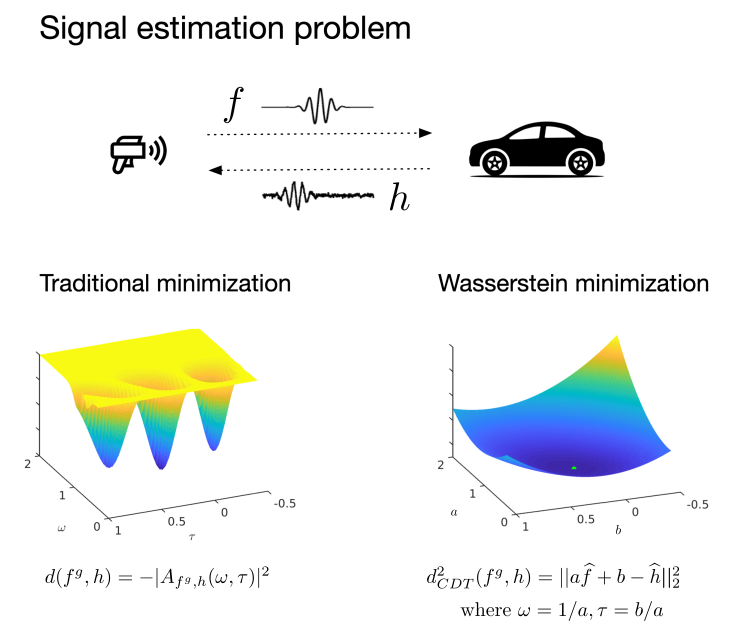}
    \caption{Depiction of a typical parametric signal estimation problem. A frequency and time delay problem can be converted into a linear least squares minimization problem by using transport transforms (CDT).}
    \label{fig:estimation_fig}
\end{figure}

\subsection{Linearizing classification problems}

The symmetry property of transport transforms helps identify which types of classification problems each transform can solve. The analysis comes from understanding how classes are generated and represented in transform space.  Generally, classes consist of deformations of a template distribution $\nu$, forming sets $\mathbb{S}_{\nu,G} = \{\nu^g: \,  g \in G\}$, where $\nu^g$ and $G$ depend on each specific transport transform framework, as summarized in Table \ref{tab: table classes}. 
These models for classes are often called in the literature as \textit{algebraic generative models}. 
When $G$ is a convex group, the classes will be convex in transform space as they are essentially conformed by $g \circ \widehat{\nu}$ such that $g \in G$. Roughly speaking, changes in the independent variable in the original signal space transform into deformations in the dependent variable. In this case, when working with finite samples, any two distinct classes can be linearly separated in the transform space. We refer the reader mainly to \cite{aldroubi2021partitioning, kolouri2016radon}.


\begin{table}[h]
\centering
\caption{}
\label{tab: table classes}       
%
%
\begin{scriptsize}
\begin{tabular}{p{1.4cm}p{2.75cm}p{4.35cm}p{0.9cm}p{1.5cm}}
    \hline\noalign{\smallskip}
    $\nu$ & $\nu^g$& density $f_\nu^g$ &$\widehat{\cdot}$ & $G$ \\
    \noalign{\smallskip}\svhline\noalign{\smallskip}
    $\nu\in\mathcal{P}(\mathbb R)$& $\nu^g=g_\#\nu$& $f_\nu^g=g'\cdot f_\nu\circ g$ &CDT&
    $G\subseteq\mathcal{G}_{CDT}$ \\
    \hline
    $\nu\in\mathcal{P}(\mathbb R ^2)$& $\nu^g=\mathcal{R}^{-1}(L^g_\#(\mathcal{R}(\nu)))$ &$f_\nu^g=\mathcal{R}^{-1}\left(\mathrm{det}(DL^g)\cdot\mathcal{R}(f_\nu)\circ L^g\right)$ &RCDT & $G\subseteq \mathcal{G}_{RCDT}$\\
    \hline
    $\nu\in\mathcal{P}(\mathbb R^d)$& $\nu^g=g_\#\nu$ & $f_\nu^g=\mathrm{det}({Dg})\cdot f_\nu\circ g$ &LOT&
    $G\subseteq\mathcal{G}_{LOT}$ \\
    \noalign{\smallskip}\hline\noalign{\smallskip}
\end{tabular}
\end{scriptsize}
\end{table}

\begin{theorem} By using the notation in each row of Table \ref{tab: table classes}, we have the following:  
    Given a template $\nu$ with compact support, consider the class
    $C_\nu=\{\nu^g: \, g\in G\}$, where $G$ is a convex subgroup of $\mathcal{G}$, then
    $\widehat{\mathbb{S}_{\nu,G}}=\{\widehat{\nu^g}: \, g\in G\}$
    is a convex set in the transport transform space.
\end{theorem}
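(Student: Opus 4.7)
The plan is to exploit the symmetry property of each transform listed in Table~\ref{tab: table classes}, which says that $\widehat{\nu^{g}}$ is obtained from $\widehat{\nu}$ by composing with $g$ on the outside (for CDT and LOT) or slicewise on the outside (for RCDT). Because composing on the outside by $g$ is an $\mathbb{R}$-linear operation in $g$, the map $g \mapsto \widehat{\nu^g}$ is affine on $G$, and an affine image of a convex set is convex. So the convexity assumption on $G$ transfers directly to convexity of $\widehat{\mathbb{S}_{\nu,G}}$.

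\textbf{Key steps.} Fix $g_1, g_2 \in G$ and $t\in[0,1]$; the goal is to find $g\in G$ with
\[
\widehat{\nu^{g}} \;=\; (1-t)\,\widehat{\nu^{g_1}} + t\,\widehat{\nu^{g_2}}.
\]
First, invoke the symmetry property of the relevant transform to rewrite $\widehat{\nu^{g_i}} = g_i\circ \widehat{\nu}$ (for CDT and LOT), or $\widehat{\nu^{g_i}}(\xi,\theta)=(g_i)_\theta(\widehat{\nu}(\xi,\theta))$ (for RCDT). Next, use that right-composition with a fixed function is linear in the left factor:
\[
\bigl((1-t)g_1 + t g_2\bigr)\circ \widehat{\nu} \;=\; (1-t)(g_1\circ \widehat{\nu}) + t(g_2\circ \widehat{\nu}),
\]
with the analogous pointwise identity in the slicewise RCDT case. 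Since $G$ is convex, the candidate $g := (1-t)g_1 + t g_2$ again lies in $G$. Applying the symmetry property in the forward direction one more time,
\[
\widehat{\nu^{g}} \;=\; g\circ \widehat{\nu} \;=\; (1-t)\,\widehat{\nu^{g_1}} + t\,\widehat{\nu^{g_2}} \;\in\; \widehat{\mathbb{S}_{\nu,G}},
\]
which is exactly the required convex-combination condition.

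\textbf{Expected obstacle.} The argument is structurally uniform across CDT, LOT, and RCDT, but the one place requiring care is RCDT, where ``composition'' is really a $\theta$-parametrized family of compositions and one must verify that a convex combination of elements of $\mathcal{G}_{RCDT}$ still yields, for every $\theta$, a strictly increasing bijection of $[-R,R]$; this is true because all such bijections fix the endpoints $\pm R$ and strict monotonicity is preserved by convex combinations with positive weights. A second subtlety is that the compact-support hypothesis on $\nu$ ensures $\widehat{\nu}$ takes values in a compact set contained in the common domain on which every $g\in G$ is defined, so all the compositions appearing above are unambiguous and the codomain equalities are genuine.
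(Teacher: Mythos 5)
Your argument is correct and is exactly the one the paper has in mind: the text immediately preceding the theorem notes that the transformed classes are "essentially conformed by $g\circ\widehat{\nu}$ with $g\in G$," and the paper then delegates the details to the cited references rather than writing them out. Your proof fills in precisely those details --- linearity of precomposition plus the convexity hypothesis on $G$ --- and your two cautionary remarks (the slicewise reading for RCDT and the role of compact support in making the compositions well defined) are the right ones.
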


When working with real-world classification problems, verifying that classes satisfy the algebraic generative model given above is often not possible. The heuristic is that if elements in the same class can be considered as some mass-preserving deformation of a template, then transport transforms could be used. As an example, consider classifying images of segmented faces
into faces that are either neutral (class 1) or smiling (class 2). Figure \ref{fig:classification_fig_1} shows a few example faces from each class, along with the projection of previously separated test data onto the 2D penalized linear discriminant (PLDA) analysis \cite{wang2011penalized} subspace computed from training data (see \cite{kolouri2016radon} for more details). In the right panels, each dot corresponds to the projection of one image. The top plot corresponds to PLDA projections computed using the native signal intensities, while the bottom plot shows the projections computed in RCDT \cite{kolouri2016radon} space. The RCDT space offers a simplified data geometry where machine learning can be more successful.

\begin{figure}[h!]
    \centering
    \includegraphics[width=0.95\linewidth]{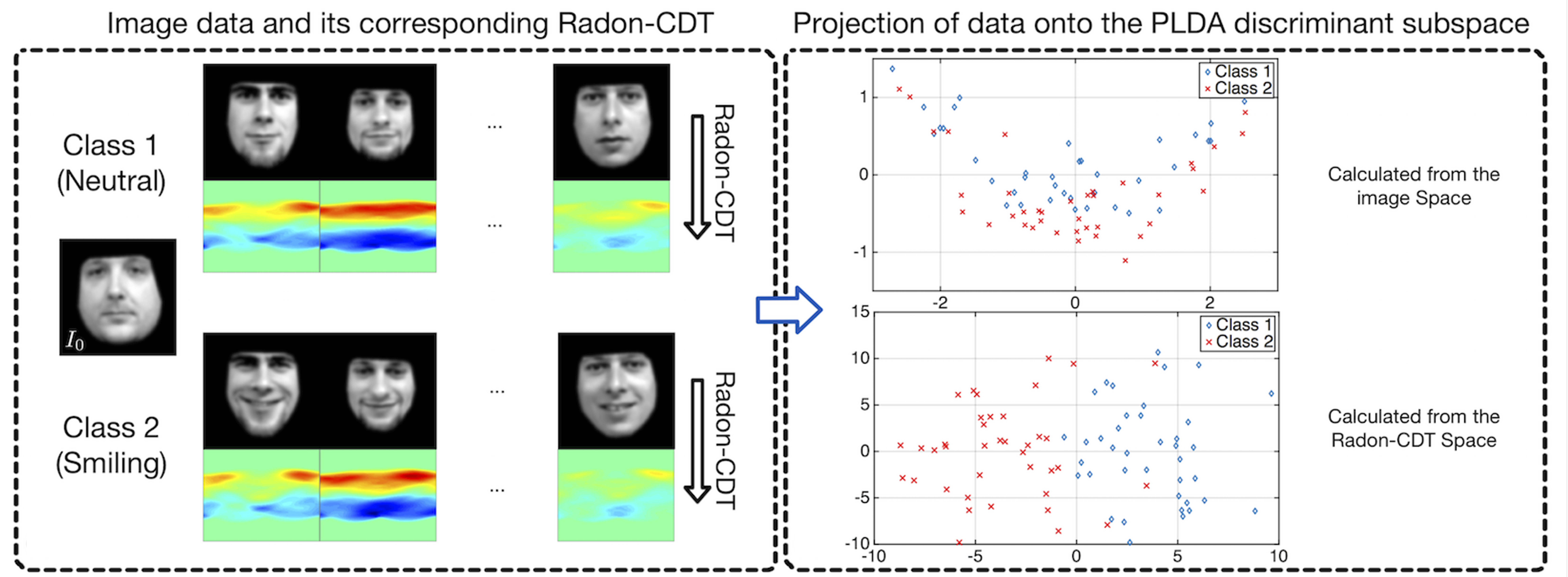}
    \caption{Classifying data in native image domain as opposed to in transport transform domain (RCDT). Smiling and neutral faces are non-linearly distinguishable from one another in the image domain. The data classes become more convex in the transform domain, where they can be separated with a linear classifier.  Figure adapted from \cite{kolouri2016radon}.}
    \label{fig:classification_fig_1}
\end{figure}

As another example, we demonstrate the ability of the transport signal transformation framework to create a classifier capable of dealing with channel nonlinearities due to turbulence in optical communications. A simple nearest subspace classification method in RCDT domain \cite{shifat2021radon} can be used to effectively classify symbols that represent sequences of 5 bits (see Figure \ref{fig:classification_fig_2} Top-Right). 
In this case, the classes consist of noisy deformations of template symbols that occur during transmission over an optical channel as represented in Figure \ref{fig:classification_fig_2} Top-Left. 
The transport-based classification method can show substantial improvements with respect to state-of-the-art Deep Learning in classification accuracy (bit error rate) as a function of the amount of training data used (Fig. \ref{fig:classification_fig_2} bottom plot 1), out-of-distribution performance (Fig. \ref{fig:classification_fig_2} bottom plot 1 plot 2) and computational performance (Fig. \ref{fig:classification_fig_2} bottom plot 3), where several orders of magnitude improvements are obtained). 

\begin{figure}[h!]
    \centering
    \includegraphics[width=\linewidth]{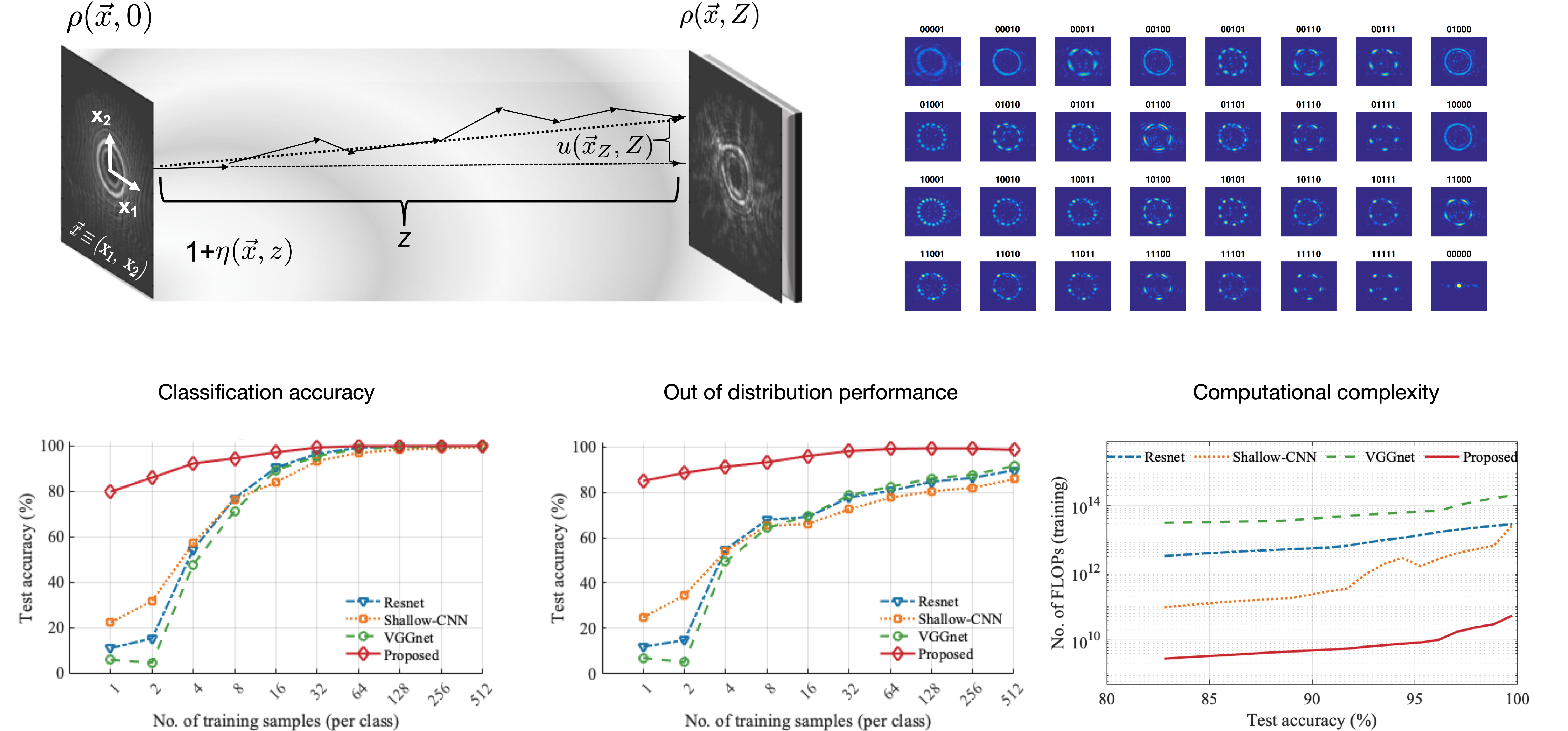}
    \caption{OAM patterns (top-right)  are emitted by the transmitter, travel through a turbulent channel, and must be decoded by the receiver (32 class classification problem). The transport systems representation (red) substantially outperforms deep learning techniques in terms of classification accuracy as a function of the number of training data, out-of-distribution performance, and number of floating points operations (FLOPS). Figure adapted from \cite{shifat2021radon}.}
    \label{fig:classification_fig_2}
\end{figure}

\subsection{Transport-based Morphometry (TBM)}

Beyond facilitating signal estimation and classification problems by rendering data classes convex, the transport transformation framework can also be viewed as a more general mathematical modeling technique. When applied to modeling forms, structures, or morphology, the technique has been named Transport-based Morphometry (TBM) \cite{basu2014detecting,kundu2018discovery,kundu2020enabling,kundu2024discovering,ironside2024fully}. 

In medicine and biology, TBM has been applied to model subcellular mass distributions \cite{basu2014detecting,wang2010optimal,ozolek2014accurate,tosun2015detection,hanna2017predictive,rabbi2023transport}, gray and white matter brain morphology \cite{kundu2018discovery,kundu2024discovering,kundu2019assessing,kundu2021investigating},  brain hemorrhage morphology in stroke \cite{ironside2024fully}, and knee cartilage in osteoarthritis \cite{kundu2020enabling}, to name a few. The idea is simple and can be interpreted in a 3-step pipeline as shown in Figure \ref{fig:tbm_fig_1}. Given a set of labeled data (signals or images) $\{f_1,y_1\},\{f_2,y_2\},\cdots,\{f_N,y_N\}$, where $f_k$ corresponds to the signal and $y_k$ to a label such as patient age, molecular measurement, class (diseased vs. non-diseased), we first choose a suitable reference (such as the intrinsic mean of the dataset as suggested in \cite{basu2014detecting,kolouri2016continuous}) and use it to compute the transport transform (LOT, CDT, RCDT, etc.) to obtain a representation in transport domain $\{\widehat{f}_1,y_1\},\{\widehat{f}_2,y_2\},\cdots,\{\widehat{f}_N,y_N\}$. 
Once we have a representation in transport transform space,  different statistical methods (PCA, LDA, PLDA, CCA, etc) can be employed to understand the salient features in the dataset, the most important attributes associated with each label or class, or other characteristics of interest. Interestingly enough, the characterizations obtained in transform space can often be traced back to physical quantities such as form or shape providing a \textit{natural} view of the dataset.

\begin{figure}[h!]
    \centering
    \includegraphics[width=\linewidth]{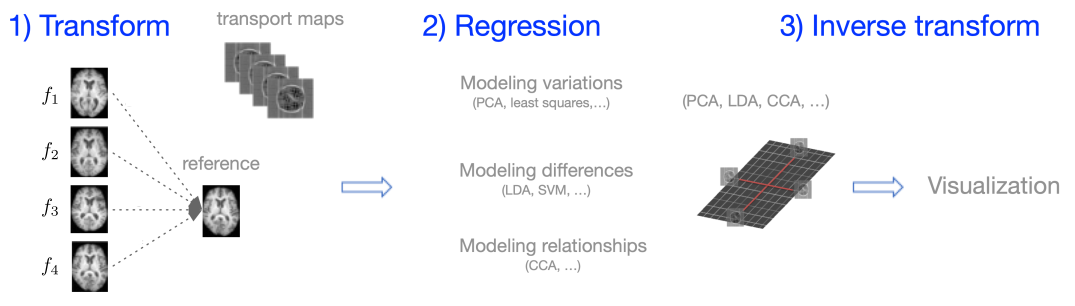}
    \caption{Transport-based morphometry (TBM) can be understood as a three-step process. In the first step, a reference measure is chosen, and the transport transforms of the signals/images are computed. In the second step,  the data represented in transform space is modeled according to different goals. The third step consists of visualizing the model obtained by the framework by taking the inverse transforms of the representations in the previous step.} 
    \label{fig:tbm_fig_1}
\end{figure}

For example, given a large set of images $f_1,f_2,\cdots,f_N$ scientists are often interested in organizing the data into its main principal components - i.e., a decomposition of different modes describing the variation in forms in the dataset. Principal component analysis (PCA) \cite{anderson1958introduction} is a common way to do this, and has been extensively employed in medical image analysis to visualize `registration' modes (see \cite{frangi2002automatic} for example). 

Though the procedure can be described in the continuous sense, here we describe its discrete implementation, and thus we take $\widehat{f}_k$ to be (sampled) discrete vectors. 
The idea in PCA is to compute a set of orthonormal basis functions $\{\widehat{\phi_k}\}_{k=1}^M$ which can reconstruct the given dataset in the least squares sense. 
Since we apply PCA in the transport transform space, the procedure involves computing the covariance matrix of the transformed dataset $S = 1/N \sum_{k=1}^{N} (\widehat{f}_k-\bar{m})(\widehat{f}_k-\bar{m})^T$, where $\bar{m} =  1/N \sum_{k=1}^{N} \widehat{f}_k$ is its mean. 
It can be shown that the top $M<N$ eigenvectors $\widehat{\phi_k}$ of $S$ correspond to the solution of the least squares approximation problem, subject to the constraint that the set $\{\widehat{\phi_k}\}_{k=1}^M$ be orthonormal. 
Each $\widehat{\phi_k}$ can be thought to encode a transport `mode of variation' explaining the dataset. 
To visualize the deformation mode $\widehat{\phi_k}$ we can invert the line $\widehat{f_{\alpha,k}} :=\bar{m} + \alpha\widehat{\phi_k}$, $\alpha\in\mathbb R$, in transform space 
to obtain the respective functions $f_{\alpha,k}$ in native space.
This procedure is demonstrated in a nuclear structure dataset in Figure \ref{fig:tbm_fig_PCA} (see \cite{basu2014detecting,rabbi2023transport} for more details). In this database, each image corresponds to a nucleus segmented from a microscopy image. The intensity at each pixel is proportional to the relative amount of chromatin present at that location. The left panel shows the raw images and the right panel shows the deformation modes (obtained by the TBM method) explaining shape and texture variations present in the dataset. The first mode corresponds to size differences, the second to elongation, the third chromatin displacements towards the northeast/southwest direction, the fourth contains variations in chromatin from the center of the nucleus to the periphery in a radial fashion, and so on. These help us visualize and understand the variations present in the dataset in terms of language that can be associated with specific physical characteristics. 

\begin{figure}[h!]
    \centering
    \includegraphics[width=0.95\linewidth]{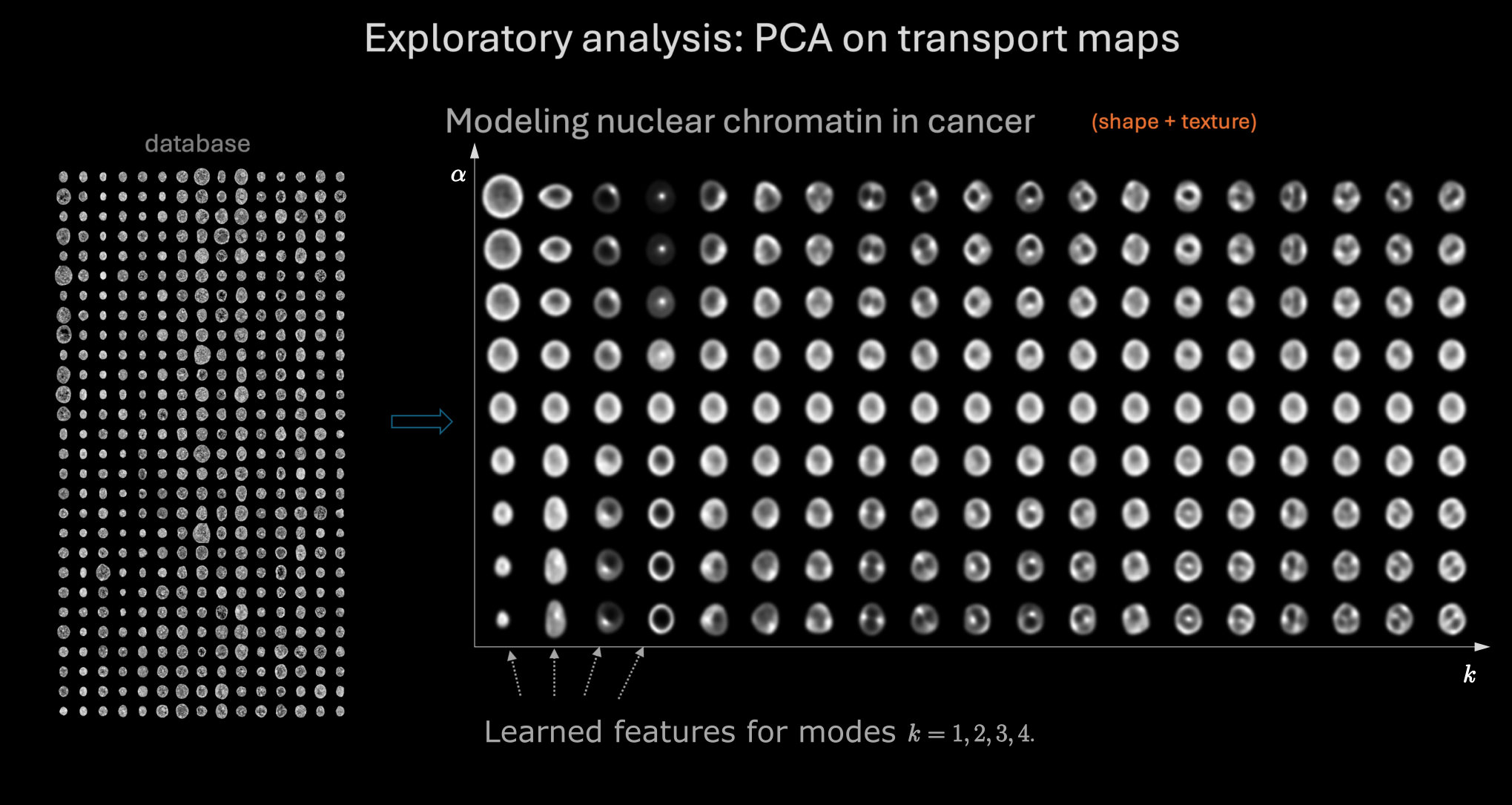}
    \caption{Transport-based morphometry (TBM) used to visualize variations in texture and shape of nuclear chromatin in cancer.}
    \label{fig:tbm_fig_PCA}
\end{figure}

In addition to visualizing the main modes of variation within a dataset, we can also use TBM to visualize the main modes of \emph{discrimination} between different classes. Let the dataset be as defined above $\{f_1,y_1\},\{f_2,y_2\},\cdots,\{f_N,y_N\}$, where the labels $y_k$ contain class information (e.g. malignant vs. benign cancer). We can use the TBM processing pipeline to discover modes of variation that characterize differences between these classes. 
Indeed, we can use regression/classification methods on $\{\widehat{f}_1,y_1\},\{\widehat{f}_2,y_2\},\cdots,\{\widehat{f}_N,y_N\}$ to discover meaningful differences between the classes. One such procedure could be the so-called Penalized Linear Discriminant Analysis (PLDA) method \cite{wang2011penalized}, which is an improvement upon the classic Linear Discriminant Analysis (LDA). 
The new directions $\widehat{\phi_k}$ obtained  by PLDA are characterized by the solution of the generalized eigenvalue problem $S \widehat{\phi} = \lambda (S_W + \gamma I)\widehat{\phi}$, 
with $S$ as above in the PCA procedure, and where $S_W$ is the so-called `within' covariance matrix which is simply the sum of the covariance matrices corresponding to each class. The procedure can be understood as a `mixture' between PCA and LDA, with $\gamma$ a regularization term. When $\gamma \rightarrow 0$, we have the traditional LDA procedure \cite{fisher1936use}, and when $\gamma \rightarrow \infty$,  we recover PCA. 
As opposed to PCA which optimizes the least squares reconstruction, we now have an orthogonal decomposition into multiple components $\widehat{\phi_k}$ that optimizes for the most discriminating directions between the classes.
Figure \ref{fig:tbm_fig_2} shows an example of TBM using PLDA applied to discover discriminating information between benign and malignant cancer nuclei. The linear classifier $\phi_1$ (the first direction retrieved by the PLDA procedure) can be reconstructed back in image space as before by inverting the line $\widehat{f_{\alpha,1}}=\bar{m}(x) + \alpha \widehat{\phi_1}$.
The visualization is provided in the right portion of the figure, demonstrating that malignant nuclei tend to have their chromatin more evenly spread throughout the nuclei than their benign counterparts. Whereas the benign nuclei tend to have their chromatin more concentrated near the periphery of the nucleus. 

The TBM procedure is not limited to PCA, LDA, or PLDA. In fact, it can be used more generally. Instead of using binary regression (classification) as in the example above, we can also perform regression onto any continuous labels $y_1,y_2,\dots$ that may be available in the original dataset. Canonical component analysis (CCA), for example, can be used to find canonical components $\widehat{w_k},v_k$ in transport transform space and label space, respectively. 
The canonical components $\widehat{w_k},v_k$ 
maximize the correlation coefficients of the projections $\text{Corr}(\widehat{w_k}^T \widehat{f}, v_k^T y)$. As such they can be used to recover meaningful relationships between transport modes and functional data. Once the relationships are obtained, they can also be reconstructed and visualized back in signal/image space. Examples are omitted here for brevity, but they can be found in \cite{basu2014detecting,kundu2018discovery,kundu2019assessing,kundu2021investigating} where subcellular mass concentrations have been correlated with drug dosage, brain white and gray matter measurements correlated with ageing, cardiorespiratory fitness, reaction times and others.

\begin{figure}[h!]
    \centering
    \includegraphics[width=0.95\linewidth]{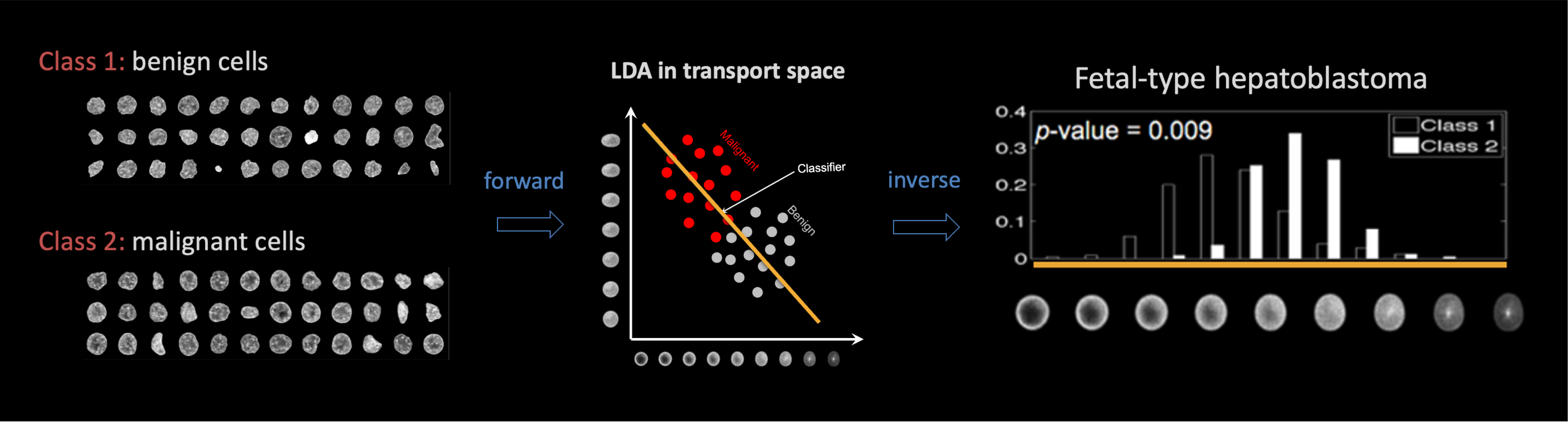}
    \caption{TBM is applied to model discriminating information between nuclei from benign and malignant tissues. Nuclear images are first transformed into transport space, where penalized linear discriminant analysis is applied. The resulting classifier (orange line) is inverted and visualized in image space, along with the projection of the dataset. In this case, TBM suggests the hypothesis that in malignant nuclei, nuclear chromatin is often more dispersed throughout the nucleus, in contrast to benign nuclei, which have more chromatin positioned at the periphery of the nucleus. Figure adapted from \cite{basu2014detecting}. }
    \label{fig:tbm_fig_2}
\end{figure}


\section{Summary}
This chapter reunites the most important properties of the Linear Optimal Transport (LOT) transform together with its variants based on Euclidean one-dimensional OT. We decided to write it using measure theory since it greatly simplifies the notation and is better suited to understanding the spaces where these transforms are defined. We took the viewpoint of starting with a bijective map into a Euclidean space (based on OT) and then copying the structure from the Euclidean space. Although in some cases this bijection has a very clear interpretation from OT, in some others it is a combination of techniques that aim to balance the most useful aspects of Optimal Transport and Hilbert Spaces methods. The simplicity of this approach is sometimes overlooked in the literature in favor of linearizing in the strict sense of finding a linear geometric structure of different variants of OT. The exposition is not exhaustive since other transport-based transforms (or frameworks) exist. For example, Linearized Optimal Transport on manifolds \cite{sarrazin2023linearized} which includes the Linear Circular Optimal Transport (LCOT) embedding \cite{martin2023lcot} (i.e., LOT for one-dimensional circular domains), Linearized Hellinger-Kantorovich (LHK) approach \cite{cai2022linearized}, Linear Optimal Partial Transport (LOPT) embedding \cite{bai2023linear}, linear Gromov-Wasserstein distance \cite{beier2021linear}.

\section*{Acknowledgments}
This work was funded in part by ONR award N000142212505 and NIH award GM130825. Authors wish to offer Professor Akram Aldroubi their sincere thanks for his teaching and support throughout many years.

\section*{Appendix}

\addcontentsline{toc}{section}{Appendix}


\noindent \textbf{LOT-geodesics.}
Consider the curve of measures $\rho_t:=\left((1-t)\widehat{\nu_1}+t\widehat{\nu_2}\right)_{\#}\mu_r$ for $t\in[0,1]$.
 First, when $t=0$, $\rho_0=(\widehat{\nu_1})_\#\mu_r=\nu_1$, and when $t=1$, $\rho_1=(\widehat{\nu_2})_\#\mu_r=\nu_2$.
 Secondly, for each fixed $t$, by Brenier's Theorem the map $(1-t)\widehat{\nu_1}+t\widehat{\nu_2}$ is the optimal transport map between $\mu_r$ and $\rho_t$, since it pushes $\mu_r$ to $\rho_t$ by definition of $\rho_t$, and it is convex because is a non-negative linear combination of convex functions.
 Finally, $\rho_t$ is a constant speed geodesic:
 \begin{align*}\label{eq: geodesic lot}
    d_{LOT}(\rho_t,\rho_s)&=\|\widehat{\rho_t}-\widehat{\rho_s}\|_{L^2(\mu_r)}\\
    &=\|(1-t)\widehat{\nu_1}+t\widehat{\nu_2}-(1-s)\widehat{\nu_1}-s\widehat{\nu_2}\|_{L^2(\mu_r)}\\
    &=\|(s-t)(\widehat{\nu_1}-\widehat{\nu_2})\|_{L^2(\mu_r)}=|s-t|d_{LOT}(\nu_1,\nu_2).
    \end{align*}

\noindent\textbf{Characterization property of CDT.}

\begin{proof}[Proof of Theorem \ref{thm: characterization property 1d}]
    Since $\widehat{\nu}$ is non-decreasing, by Theorem \ref{thm: Brenier} and \eqref{eq: gen inverse comp with cdf}, it is the transport map from the reference $\mu_r:=(\widehat{\nu}^\dagger)_\#\nu$ to $\nu$ (since $(\widehat{\nu}\circ\widehat{\nu}^\dagger)_\#\nu=\nu$).
    
    Now, given any $\sigma\in\mathcal{P}(I)$ there exists a unique non-decreasing $g$  such that $g_\#\nu = \sigma$ (such function $g$ is the unique optimal Monge map). 
    Thus, by property \eqref{eq: property ivan inverse},  $\widehat{\sigma}=\widehat{g_\#\nu}=g\circ \widehat{\nu}$. 
    
    Since $g\circ \widehat{\nu}$ is non-decreasing and satisfies $(g\circ \widehat{\nu})_\# \mu_r = \sigma$, using the uniqueness in Theorem \ref{thm: Brenier}, we have that $\widehat{\sigma}= g\circ \widehat{\mu}$  is the optimal transport map from $\mu_r$ to $\sigma$. This proves that the transform of any $\sigma$ is exactly the CDT with respect to the same reference $\mu_r$.
\end{proof}

\noindent\textbf{Geodesic property of SCDT.}
The curve $\rho_t$ is a curve of Radon measures, that is, $\rho_t\in \mathcal{M}_+(I)$ with 
$$|\rho_t|=(1-t)|\nu_1|+t|\nu_2|$$
since $\left((1-t)\widehat{\nu_1^N}+t\widehat{\nu_2^N}\right)_\#\mathcal{L}_{[0,1]}$ is a probability measure. 
Thus, given $s,t\in[0,1]$    
    \begin{equation*}        d_{SCDT}^2(\rho_t,\rho_s)=d_{CDT}^2(\rho_t^N,\rho_s^N)+\left(|\rho_t|-|\rho_s|\right)^2.
    \end{equation*}
On the one hand,
\begin{equation*}
    \left||\rho_t|-|\rho_s|\right|=|t-s|\left||\nu_1|-|\nu_2|\right|.
\end{equation*}
On the other hand, notice that
$$\rho_t^N=\left((1-t)\widehat{\nu_1^N}+t\widehat{\nu_2^N}\right)_\#\mathcal{L}_{[0,1]}$$
is the geodesic curve between $\nu_1^N$ and $\nu_2^N$ with respect to $d_{CDT}$ and $W_2$. Thus,
\begin{equation*}
    d_{CDT}(\rho_t^N,\rho_s^N)=|t-s|d_{CDT}(\nu_1^N,\nu_2^N).
\end{equation*}
Combining all together we obtain:
\begin{align*}
d_{SCDT}^2(\rho_t,\rho_s)&=(t-s)^2d_{CDT}^2(\nu_1^N,\nu_2^N)+(t-s)^2\left(|\nu_1|-|\nu_2|\right)^2\\
&=(t-s)^2d_{SCDT}^2(\nu_1,\nu_2).
\end{align*}

\noindent\textbf{Inverse formula and symmetry property of RCDT.}

\begin{remark}\label{rem: desint}
We notice that if $\mu$ is a probability measure on $\mathbb{R}^2$, then
$\nu:=\mathcal{R}(\mu)$ is a probability measure defined on $\mathbb{R}\times [0,\pi]$. 
By the disintegration theorem in classic measure theory, there exists a $\nu-$a.s. unique set of measures $(\nu_\theta)_{\theta\in [0,\pi]}\subset \mathcal{M}(\mathbb{R})$ such that for any $\psi \in C_0(\mathbb{R}\times[0,\pi])$, we have 
\begin{align}
    \int_{\mathbb{R}\times[0,\pi]}\psi(t,\theta) \, d\nu(t,\theta)=\int_{[0,\pi]}\int_{\mathbb{R}}\psi(t,\theta) \, d\nu_\theta(t) \, d\theta \label{eq: nu_theta}.
\end{align}
Given a fixed $\theta\in[0,\pi]$, $\mathcal{R}(\mu)_\theta$ describes the projected measure of $\mu$ into the 1D space indexed by $\theta$ \cite[Proposition 6]{bonneel2015sliced}, that is: 
$$\mathcal{R}(\mu)_\theta=\nu_\theta=\theta_\#^* \mu\in\mathcal{P}(\mathbb{R}).$$
By abuse of notation, we will use $$\nu_\theta=\nu(\cdot,\theta) \quad \text{ and so } \quad \mathcal{R}(\mu)_\theta=\mathcal{R}(\mu)(\cdot,\theta)=\theta_\#^* \mu.$$
i.e., $(\mathcal{R}(\mu)(\cdot,\theta))_{\theta\in[0,\pi]}$ also denotes the `disintegration' of $\mathcal{R}(\mu)$.
\end{remark}

\begin{remark}
    By using disintegration of measures, the inverse formula of RCDT can be written as 
    \begin{align*}
        T\mapsto \mathcal{R}^{-1}\left[\left(T(\cdot,\theta)_\#\mathcal{L}_{|_{[0,1]}}\right)_{\theta\in[0,\pi]}\right]=\mathcal{R}^{-1}\left[\left(CDT^{-1}(T(\cdot,\theta))\right)_{\theta\in[0,\pi]}\right].
    \end{align*}
\end{remark}

Now, we can decide the formula of the inverse RCDT:
    The inverse formula of the RCDT follows from the identity:
    \begin{equation}
        T(\cdot,\theta)_\#[\sigma(\cdot,\theta)]=[L^T_\#\sigma](\cdot,\theta)
    \end{equation}
    that holds for every measure $\sigma\in \mathcal{P}(\Omega)$, $\Omega\subset\mathbb R^2$. 
Notice that, in particular, for $\sigma=\mathcal{L}_{|_{[0,1]\times [0,\pi]}}$, we have $\mathcal{L}_{|_{[0,1]\times [0,\pi]}}(\cdot,\theta)=\mathcal{L}_{|_{[0,1]}}$.
    

Then, the symmetry property of the RCDT can be deduced as follows:
By using Remark \ref{rem: desint}, we obtain
\begin{align*}
[\mathcal{R}\nu^g](\cdot,\theta)
&=[\mathcal{R}\mathcal{R}^{-1}(L^g_\#\mathcal{R}\nu)](\cdot,\theta)=
[L^g_\#\mathcal{R}\nu](\cdot,\theta)\\
&= {(g_\theta)}_\# \left([\mathcal{R}v](\cdot,\theta)\right) = {(g_\theta)}_\# \left([\mathcal{R}v]_\theta\right) .
\end{align*}
By taking the CDT of such dimensional measure, and using the symmetry property of the CDT, we get
\begin{equation*}
    \widehat{{(g_\theta)}_\# \left([\mathcal{R}v]_\theta\right) }= g_\theta \circ \widehat{[\mathcal{R}\nu]_\theta} .
\end{equation*}
Thus,
\begin{align*}
    \widehat{\nu^g}^{SCDT}(\theta,\xi)&=
    \widehat{[\mathcal{R}\nu^g](\cdot,\theta)}(\xi) = g_\theta(\widehat{[\mathcal{R}v]_\theta}(\xi))\\
    &=g(\widehat{[\mathcal{R}v](\cdot,\theta)}(\xi), \theta)=g(\widehat{\nu}^{SCDT}(\xi,\theta), \theta).
\end{align*}

\noindent\textbf{RCDT-distance is independent from the choice of a reference.}
Here we will discuss Remark \ref{remark: rcdt not depend ref}.
First, notice that the RCDT is characterized by satisfying the property
\begin{equation}\label{eq: radon CDT integral relation}
    \int_{-\infty}^{\widehat{f_\nu}^{RCDT}(\xi,\theta)}\mathcal{R}f_\nu(t,\theta) dt=\xi=\int_0^\xi 1 \, dt \qquad \forall \theta\in\mathbb [0,\pi],\,  \xi\in[0,1]
\end{equation}
In the literature (see, for e.g., \cite{kolouri2015radon,gong2023radon}), the left-most RHS is taken in a more general way:
Given a reference $r$ (a function $r:B(0,R')\to \mathbb R_{\geq 0}$ or a measure $r$ supported on a ball $B(0,R')$), we consider the Radon Cumulative Distribution Transform with respect to $r$, denoted by $RCDT_r$, as the transformation that satisfies the property
\begin{equation*}
    \int_{-\infty}^{\widehat{f_\nu}^{RCDT_r}(\xi,\theta)}\mathcal{R}f_\nu(t,\theta) dt=\int_0^\xi \mathcal{R}r(t,\theta) dt =F_{\mathcal{R}r(\cdot,\theta)}(\xi)
\end{equation*}
where $F_{\mathcal{R}r(\cdot,\theta)}$ is the CDF o the probability density $\mathcal{R}r(\cdot,\theta)$.
So, 
\begin{align*}
    \widehat{f_\nu}^{RCDT_r}(\xi,\theta)
    &=\widehat{[\mathcal{R}f_\nu](\cdot,\theta)}^{CDT_{[\mathcal{R}r](\cdot,\theta)}}(\xi), \qquad \xi \in [-R',R'], \theta\in[0,\pi]
\end{align*}
The inverse formula in this case is 
\begin{align*}
    T\mapsto \mathcal{R}^{-1}(L^T_\#(\mathcal{R}r)) \qquad \text{ where } L^T(t,\theta)=(T(t,\theta),\theta).
\end{align*}
The new RCDT-distance with respect to the reference $r$ is given by
\begin{align*}
     d_{RCDT_r}^2&(\nu_1,\nu_2) =\|\widehat{\nu_1}^{RCDT_r}-\widehat{\nu_2}^{RCDT_r}\|^2_{L^2(d\mathcal{R}r)}\\
     &= \frac{1}{\pi}\int_0^\pi \int_{-R'}^{R'} |\widehat{\nu_1}^{RCDT_r}(\xi,\theta)-\widehat{\nu_2}^{RCDT_r}(\xi,\theta)|^2 [\mathcal{R}r](\xi,\theta) \, d\xi d\theta\\
     &=\frac{1}{\pi}\int_0^\pi d^2_{CDT_{\mathcal{R}r(\cdot,\theta)}}(\mathcal{R}\nu_1(\cdot,\theta), \mathcal{R}\nu_1(\cdot,\theta)) \ d\theta\\
     &=\frac{1}{\pi}\int_0^\pi d^2_{CDT}(\mathcal{R}\nu_1(\cdot,\theta), \mathcal{R}\nu_1(\cdot,\theta)) \ d\theta=SW_2^2(\nu_1,\nu_2)=d_{RCDT}^2(\nu_1,\nu_2)
\end{align*}
thus, the RCDT-distance does not depend on the choice of the reference. 
\\

\noindent\textbf{Inverse formula of RSCDT.}
By using disintegration of measures, the inverse formula of the RCDT can be written as follows:
\begin{align*}
    ((T,a),(U,b))\mapsto& \mathcal{R}^{-1}\left[\left(SCDT^{-1}\left(T(\cdot,\theta),a(\theta),U(\cdot,\theta),b(\theta)\right)\right)_{\theta\in[0,\pi]}\right]\\
    &=\mathcal{R}^{-1}\left[\left(a(\theta)\left(T(\cdot,\theta)_\#\mathcal{L}_{|_{[0,1]}}\right)-b(\theta)\left(U(\cdot,\theta)_\#\mathcal{L}_{|_{[0,1]}}\right)\right)_{\theta\in[0,\pi]}\right].
\end{align*}

\bibliographystyle{spmpsci}
\bibliography{references}

\end{document}